\tikzset{close/.style={near start,outer sep=-2pt}} 
\tikzset{
  on each segment/.style={
    decorate,
    decoration={
      show path construction,
      moveto code={},
      lineto code={
        \path [#1]
        (\tikzinputsegmentfirst) -- (\tikzinputsegmentlast);
      },
      curveto code={
        \path [#1] (\tikzinputsegmentfirst)
        .. controls
        (\tikzinputsegmentsupporta) and (\tikzinputsegmentsupportb)
        ..
        (\tikzinputsegmentlast);
      },
      closepath code={
        \path [#1]
        (\tikzinputsegmentfirst) -- (\tikzinputsegmentlast);
      },
    },
  },
  mid arrow/.style={postaction={decorate,decoration={
        markings,
        mark=at position .5 with {\arrow[#1]{stealth}}
      }}},
}
\newtheorem{thm}{Theorem}[section]
\newtheorem{lem}[thm]{Lemma}
\newtheorem{prop}[thm]{Proposition}
\theoremstyle{definition}
\newtheorem{defn}[thm]{Definition}
\newtheorem{exam}[thm]{Example}
\newtheorem{que}[thm]{Question}
\theoremstyle{definition}
\newtheorem{obs}[thm]{Observation}
\newtheorem{nt}[thm]{Note}
\numberwithin{equation}{section}
\newcommand{\ZZ}{\mathbb{Z}}
\newcommand{\Z}{\mathbb{Z}}
\begin{document}

\title{On the structure of finitely presented Bestvina-Brady groups}

\author{Priyavrat Despande}
\address{Chennai Mathematical Institute, India.}
\email{pdeshpande@cmi.ac.in}

\author{Mallika Roy}
\address{Departament of Mathematics, Universidad del Pa\`is Vasco (UPV/EHU), Spain.}
\email{mallika.roy@upc.edu, mallikaroy75@gmail.com}

\begin{abstract}
Right-angled Artin groups and their subgroups are of great interest because of their geometric, combinatorial and algorithmic properties. 
It is convenient to define these groups using finite simplicial graphs. 
The isomorphism type of the group is uniquely determined by the graph. 
Moreover, many structural properties of right angled Artin groups can be expressed in terms of their defining graph. 

In this article we address the question of understanding the structure of a class of subgroups of right-angled Artin groups in terms of the graph. 
Bestvina and Brady, in their seminal work, studied these subgroups (now called Bestvina-Brady groups or Artin kernels) from a finiteness conditions viewpoint. 
Unlike the right-angled Artin groups the isomorphism type of Bestvina-Brady groups is not uniquely determined by the defining graph. 
We prove that certain finitely presented Bestvina-Brady groups can be expressed as an iterated amalgamated product. 
Moreover, we show that this amalgamated product can be read off from the graph defining the ambient right-angled Artin group. 
\end{abstract}
\keywords{right angled Artin group, amalgamated product, Bestvina-Brady group}
\subjclass[2010]{20F36, 20F65, 08B25}
\maketitle

\section{Introduction}\label{intro}

A right-angled Artin group (a RAAG, for short) is a finitely presented group such that the commuting relations are the only relations. 
It is perhaps easier to describe this group using finite simplicial graphs. 
Let $\Gamma$ be such a graph; then the associated RAAG, denoted by $A_{\Gamma}$, has generators corresponding to vertices of $\Gamma$ and two generators commute whenever the corresponding vertices are connected by an edge.
We refer the reader to \cite{Charney} for an encyclopedic introduction to RAAGs. 
They have become central in group theory, their study interweaves geometric group theory with other areas of mathematics. This class interpolates between two of the most classical families of groups, free and free abelian groups, and its study provides uniform approaches and proofs, as well as rich generalisations of the results for free and free abelian groups. The study of this class from different perspectives has contributed to the development of new, rich theories such as the theory of CAT(0) cube complexes and has been an essential ingredient in Ian Agol's solution to Thurston’s virtual fibering and virtual Haken conjectures.
RAAGs are important in geometric group theory for many reasons, including the fact that they have interesting subgroups; for example, Bestvina-Brady groups. 

A Bestvina-Brady group (a B-B group, for short) is the kernel of the group homomorphism $A_\Gamma \rightarrow \ZZ$ which takes all the generators of $A_\Gamma$ to $1$. 
One of the reasons they are interesting is that they provide an example of a group that satisfies the finiteness property $\mathbf{FP_n}$ but not $\mathbf{FP_{n+1}}$. 
They also provide counterexamples either to the Eilenberg–Ganea Conjecture or the Whitehead Conjecture; see \cite{BB} for details.

Though RAAGs interpolate between free groups and free abelian groups their structure is not always straightforward. 
However, many of their structural properties can be read off from the underlying graph. 
For example, a RAAG $A_{\Gamma}$ is a free product of two of its sub-RAAGs if and only if $\Gamma$ is disconnected; by a theorem of Clay \cite{Clay} all nontrivial splittings of $A_{\Gamma}$ over $\Z$ correspond to cut vertices of $\Gamma$, further it was proved by Groves and Hull \cite{GH} that $A_{\Gamma}$ splits over an abelian group if and only if $\Gamma$ is disconnected, or complete or contains a separating clique.

It is easy to observe that two non-isomorphic graphs can give rise to the same B-B group $H_{\Gamma}$ (see~\ref{imp note on non-isomorphic}).
Moreover, though the finiteness properties are completely determined by the (topology of the) clique complex of $\Gamma$, not much work has been done to understand structural properties of $H_{\Gamma}$ in terms of graph theoretic input. 
We should mention here the recent work of Chang \cite{Chang-splitting} on abelian splittings of B-B groups and the work of Barquinero-Ruffoni-Ye \cite{BRY} decomposing Artin kernels (a class of subgroups generalizing B-B groups) as graphs groups. 
In both the works the decomposition of $H_{\Gamma}$ is expressed in terms of the underlying $\Gamma$; hence we consider them as a motivation for our paper. 

The aim of this article is to show that a certain class of finitely presented B-B groups can be decomposed as an (iterated) amalgamated product of a RAAG and finitely many copies of $\Z^2$. 
We express this amalgamated product completely in terms of the underlying graph. 
To be precise, we decompose $\Gamma$ as a union of $\Gamma'$ and some triangles (i.e., $3$-cliques); this union is with the help of a suitable spanning tree. 
The subgraph $\Gamma'$ is selected on the basis that the corresponding B-B group $H_{\Gamma'}$ is isomorphic to a RAAG; the triangles correspond to copies of $\ZZ^2$. 
The classes of graphs for which this decomposition works include the $1$-skeleta of certain (extra)-special triangulations of the $2$-disk and connected graphs with a separating clique $K_n, n \geq 3$. 
Our main theorem implies that such B-B groups can be decomposed as an iterated amalgamation of RAAGs. 
We should note here that Papadima and Suciu \cite[Proposition 9.4]{PS} showed that the B-B group corresponding to an extra-special triangulation is not isomorphic to any RAAG. 
The existence of a separating clique also implies that the corresponding B-B group splits over an abelian subgroup, see Chang \cite[Theorem 3.9]{Chang-splitting}; we expand on this aspect in \Cref{conrem}. 

\section{Preliminaries and notations}\label{prelims}

Below we present the necessary preliminaries on graph theory and B-B groups.

\subsection{Graph theory} We recall and set up some basic notations and terminologies in graph theory. 
Throughout this article, we assume finite graphs which have no loops and multi-edges, i.e., all the graphs are finite simplicial. 
Given a graph $\Gamma$, we denote the set of its vertices and edges by $V(\Gamma)$ and $E(\Gamma)$, respectively. 
We denote $e=(v, w)$ to be an edge connecting vertices $v$ and $w$. 
The initial and terminal vertices of the edge $e$ are denoted by $\iota(e)$ and $\tau(e)$, respectively, such that $e=(\iota(e), \tau(e))$.
Two vertices are adjacent if they are connected by an edge. 
A \textit{spanning tree} of $\Gamma$ is a subgraph of $\Gamma$ which is a tree and contains every vertex of $\Gamma$.
As we are dealing with finite graphs it is straightforward to see that, for any spanning tree $T$ of $\Gamma$, $|E(T)| = |V(\Gamma)| - 1$.
Given any subset $V'$ of $V(\Gamma)$, the \textit{induced subgraph} (in some literature, it is also called as full subgraph) on $V'$ is a graph $\Gamma'$ whose vertex set is $V'$, and two vertices are adjacent in $\Gamma'$ if and only if they are adjacent in $\Gamma$. 

The \textit{star graph} $S_n$ of order $n$, sometimes simply known as an \textit{n-star}, is a tree on $n$ vertices with one vertex having degree $n-1$ and the other $n-1$ vertices having degree~$1$. 

Given two graphs $\Gamma_1 = (V (\Gamma_1), E(\Gamma_1))$ and $\Gamma_2 = (V (\Gamma_2), E(\Gamma_2))$, the \textit{union} of
$\Gamma_1$ and $\Gamma_2$ is $\Gamma_1 \cup \Gamma_2 = (V (\Gamma_1) \cup V (\Gamma_2), E(\Gamma_1) \cup E(\Gamma_2))$. We denote the \textit{disjoint union} of two graphs $\Gamma_1$ and $\Gamma_2$ as $\Gamma_1 \sqcup \Gamma_2$, i.e., $\Gamma_1$ and $\Gamma_2$ share no vertices.
The \textit{join} of two graphs $\Gamma_1$ and $\Gamma_2$, denoted by $\Gamma_1 \vee \Gamma_2$, is defined to be the graph union $\Gamma_1 \cup \Gamma_2$ and with every pair of
vertices $(v, w) \in V (\Gamma_1) \times V (\Gamma_2)$ being adjacent.
The join operation is commutative, that is, $\Gamma_1 \vee \Gamma_2= \Gamma_2 \vee \Gamma_1$. When a graph $\Gamma$ decomposes as a join of a vertex $v$ and another graph $\Gamma'$, the vertex $v$ is called a 
\textit{dominating vertex}, and $\Gamma$ is called a \textit{cone graph} or the \textit{cone on $\Gamma'$}. 

Two graphs $\Gamma_1$ and $\Gamma_2$ are said to be isomorphic, denoted by $\Gamma_1 \cong \Gamma_2$, if there is a bijection $\varphi : V(\Gamma_1) \rightarrow V(\Gamma_2)$ such that two vertices $v, w$ are adjacent in $\Gamma_1$ if and only if $\varphi(v), \varphi(w)$ are adjacent in $\Gamma_2$. 
The star graph $S_n$ is isomorphic to the complete bipartite graph $K_{(1,n-1)}$. Also let $\Gamma_1$, $\Gamma_2$ be cone graphs on $\Gamma'_1 \text{ and } \Gamma'_2$ respectively, then $\Gamma_1 \cong \Gamma_2$ if and only if $\Gamma'_1 \cong \Gamma'_2$.

Given a graph $\Gamma$ we construct a simplicial complex $\triangle_\Gamma$, called the \emph{flag complex}, as follows: 
the vertex set is the ground vertex set $V(\Gamma)$ and a subset of cardinality $k$ is a $(k-1)$-simplex if and only if the induced subgraph is a $k$-clique. 
In the literature, the term clique complex is also used for the flag complex.
Note that, we do not differentiate between an abstract simplicial complex and its geometric realization; any topological statement about the flag complex (equivalently, the clique complex) is about its geometric realization. 
Our main focus is on those graphs whose flag complex is simply connected. 
Such graphs form a fairly large class, for example, a connected chordal graph has the contractible flag complex.

\subsection{Bestvina--Brady groups}\label{bb_intro}

\begin{defn}
Let $\Gamma$ be a finite simplicial graph with the vertex set $V(\Gamma)$ and the edge set $E(\Gamma)$. 
The right-angled Artin group $A_{\Gamma}$ associated to $\Gamma$ has the following finite presentation:
\[
A_\Gamma = \bigl\langle V(\Gamma) \mid [v,w]=1 \text{ for each edge } (v,w) \in E(\Gamma) \bigr\rangle.
\]
Let $\varphi \colon A_\Gamma \rightarrow \ZZ$ be the group homomorphism sending all generators of $A_\Gamma$ to $1$. The \textit{Bestvina–Brady group} $H_\Gamma$ associated to $\Gamma$ is the kernel, $\ker \varphi$.
\end{defn}


We already mentioned in~\Cref{intro} that the B-B groups were first introduced in the influential work of Bestvina and Brady~\cite{BB} as an answer to a long standing open question regarding the existence of non-finitely presented groups of type \textbf{FP}---a result based on homological group theory. 

\vspace{0.4 cm}
\begin{thm}[\cite{BB}, Main Theorem]
Let $\Gamma$ be a finite simplicial graph.
\begin{itemize}
    \item[(1)] $H_\Gamma$ is finitely generated if and only if $\Gamma$ is connected.
    \item[(2)] $H_\Gamma$ is finitely presented if and only if $\triangle_\Gamma$ is simply-connected.
    \item [(3)] $H_\Gamma$ is of type $\mathbf{FP_{n+1}}$ if and only if $\Gamma$ is $n$-acyclic.
\end{itemize}
\end{thm}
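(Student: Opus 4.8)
The statement is the Bestvina--Brady Main Theorem, so the plan is to reproduce the core of their argument: run a piecewise-linear Morse theory on a CAT(0) cube complex built from $A_\Gamma$. First I would form the Salvetti complex $S_\Gamma$ -- one vertex, one oriented edge for each $v\in V(\Gamma)$, and one $k$-torus glued in for each $k$-clique of $\Gamma$ -- which has $\pi_1(S_\Gamma)=A_\Gamma$ and whose unique vertex has link equal to the flag complex on the graph with vertices $\{v^{+},v^{-}:v\in V(\Gamma)\}$ and an edge joining $v^{\epsilon}$ to $w^{\delta}$ exactly when $(v,w)\in E(\Gamma)$. Since that link is a flag complex, $S_\Gamma$ is nonpositively curved, so its universal cover $X_\Gamma$ is a contractible CAT(0) cube complex on which $A_\Gamma$ acts freely (hence with trivial cell stabilisers), properly and cocompactly.

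Next I would introduce the Morse function: realise $\phi$ by a map $f\colon X_\Gamma\to\mathbb{R}$ that is affine on each cube, integer-valued on vertices, and equivariant for the translation action of $\ZZ=A_\Gamma/\ker\phi$ on $\mathbb{R}$. Let $X_m$ be the subcomplex of $X_\Gamma$ consisting of all cubes whose vertices have $f$-value in $[-m,m]$. Since $A_\Gamma$ acts cocompactly on $X_\Gamma$ and $\ZZ$ moves a bounded interval cocompactly along $\mathbb{R}$, the Bestvina--Brady group $H_\Gamma=\ker\phi$ acts freely and cocompactly on each $X_m$, and $\bigcup_{m\ge 1}X_m=X_\Gamma$ is contractible. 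The key local computation is that at every vertex $x$ the \emph{ascending link} and \emph{descending link} of $f$ (the directions along which $f$ increases, resp.\ decreases) are each canonically isomorphic to $\triangle_\Gamma$: the ascending link is the full subcomplex of $\lk(x)$ on the vertices $\{v^{+}:v\in V(\Gamma)\}$ and the descending link the one on $\{v^{-}:v\in V(\Gamma)\}$, and by transitivity of $A_\Gamma$ on vertices this holds at every vertex.

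The heart of the argument -- and the step I expect to be the main obstacle -- is the Morse lemma: passing a critical (integer) height changes the relevant sub- and super-level subcomplex, up to homotopy, by coning off one copy of $\triangle_\Gamma$ (an ascending or descending link) for each vertex at that height, while between consecutive critical heights nothing changes up to homotopy. From this one extracts: if $\triangle_\Gamma$ is $(n-1)$-connected then each inclusion $X_m\hookrightarrow X_{m'}$ is obtained by coning off $(n-1)$-connected complexes, hence induces isomorphisms on $\pi_i$ for $i\le n-1$; since the colimit $X_\Gamma$ is contractible, every $X_m$ is then $(n-1)$-connected. For the converse one must show, by a finer analysis of the same attaching maps, that if $\triangle_\Gamma$ is not $(n-1)$-connected then the relevant homotopy groups of the $X_m$ fail to stabilise; this is where the real work lies.

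Finally I would invoke Brown's finiteness criterion: $H_\Gamma$ acts freely and cocompactly on the terms of a filtration of the contractible complex $X_\Gamma$, so $H_\Gamma$ is of type $F_n$ if and only if that filtration is essentially $(n-1)$-connected, i.e.\ (by the previous paragraph) if and only if $\triangle_\Gamma$ is $(n-1)$-connected. Specialising $n=1$ gives that $H_\Gamma$ is finitely generated iff $\triangle_\Gamma$ is connected iff $\Gamma$ is connected (the two complexes share a $1$-skeleton); specialising $n=2$ gives that $H_\Gamma$ is finitely presented iff $\triangle_\Gamma$ is simply connected. As a check on the ``only if'' of part (1): if $\Gamma$ is disconnected then $A_\Gamma$ is a nontrivial free product, so $H_\Gamma$ surjects onto the kernel of $F_2\to\ZZ$, which is free of infinite rank and hence not finitely generated.
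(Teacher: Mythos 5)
The paper offers no proof of this statement: it is imported verbatim from Bestvina--Brady \cite{BB}, so the only meaningful comparison is with their original argument, and your sketch is a faithful outline of exactly that argument (Salvetti complex, CAT(0) cube complex via Gromov's link condition, the PL Morse function extending $\phi$ affinely over cubes, ascending/descending links each a copy of $\triangle_\Gamma$, the Morse Lemma, and Brown's finiteness criterion applied to the filtration by sublevel complexes). All the positive directions you describe are correct, and your elementary check that a disconnected $\Gamma$ forces $H_\Gamma$ to surject onto an infinitely generated free group is a legitimate shortcut for the ``only if'' of (1).

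The one genuine gap is the converse of (2), which you flag but do not carry out: you must show that when $\triangle_\Gamma$ is connected but not simply connected, the loops created by coning off descending links at one height are \emph{not} killed at any later stage of the filtration, so that $\pi_1$ of the sublevel complexes fails to stabilise and Brown's criterion rules out type $F_2$. This does not follow formally from the Morse Lemma --- coning off infinitely many non-simply-connected links could in principle still yield an essentially $1$-connected filtration --- and Bestvina--Brady need a separate argument (their Theorem 4.1 and its sheets/diagonal construction, exploiting the CAT(0) geometry to produce, for every $m$, a loop in $X_m$ that bounds no disk in any $X_{m'}$). Without some version of that argument your proof establishes only the ``if'' halves of (1) and (2) plus the ``only if'' of (1); the ``only if'' of (2) remains unproved.
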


This result includes the Stallings' group~\cite{S} --- $H_\Gamma$ associated to the RAAG $F_2 \times F_2 \times F_2$ --- an example of finitely presented but not of type $\mathbf{FP_3}$ and R. Bieri's group~\cite{B} of type $\mathbf{FP_{n}}$ but not of type $\mathbf{FP_{n+1}}$, which is $H_\Gamma$ corresponding to the $\Gamma$, a join of $(n+1)$ pairs of points. 

The presentation of B-B groups was described by Dicks--Leary in~\cite{DL}:

\begin{thm}[\cite{DL}, Theorem 1]
 Let $\Gamma$ be connected. The group $H_\Gamma$ has a presentation with generators the set of directed edges of $\Gamma$, and relators all words of the form $e^n_1 e^n_2 \cdots e^n_{\ell}$, where $\ell,n \in \ZZ, n \geq 0,\ell \geq 2$, and $(e_1, \ldots,e_\ell)$ is a directed cycle in $\Gamma$.  In terms of the given generators for $A_\Gamma, e=\iota e(\tau e)^{-1}$. 
\end{thm}

\begin{figure}[H]
    \centering
    \begin{tikzpicture}
    \tikzset{
    edge/.style={draw=black,postaction={on each segment={mid arrow=black}}}
} 
\node[fill=black!100, state, scale=0.10, vrtx/.style args = {#1/#2}{label=#1:#2}] (A) [vrtx=below/$u$]     at (0, 0) {};
\node[fill=black!100, state, scale=0.10, vrtx/.style args = {#1/#2}{label=#1:#2}] (C) [vrtx=above/$v$]    at (1, 1) {};
\node[fill=black!100, state, scale=0.10, vrtx/.style args = {#1/#2}{label=#1:#2}] (B) [vrtx=below/$w$]     at (2.5, 0) {};

\draw[edge] (A) -- (B) node[midway, below] {$g$};
\draw[edge] (C) -- (B) node[midway, above right] {$f$};
\draw[edge] (A) -- (C) node[midway, above left] {$e$};
    \end{tikzpicture}
    \caption{A directed triangle. 
    }
    \label{directed triangle}
\end{figure}

Let us fix a linear order on the vertices, and orient the edges increasingly. A triplet of
edges $(e, f, g)$ forms a directed triangle  if $e = (u, v), f = (v, w), g = (u, w)$, and
$u < v < w$; see Figure~\ref{directed triangle}.


When $\Gamma$ is connected and $\triangle_\Gamma$ is simply connected, we can write down a presentation for $H_\Gamma$, called the \textit{Dicks--Leary
presentation} \cite[Theorem 1, Corollary 3]{DL}. In other words, the aforementioned theorem could be simplified as:

\begin{thm}
Suppose the flag complex $\triangle_\Gamma$ is simply connected. Then $H_\Gamma$ has presentation
\begin{equation*}
H_\Gamma = \langle e \in E(\Gamma) \mid ef = fe, ef = g \text{ if }\triangle(e, f, g) \text{ is a directed triangle } \rangle.    
\end{equation*}
Moreover, the inclusion $\iota \colon H_\Gamma \hookrightarrow A_\Gamma$ is given by $\iota({e}) = uv^{-1}$ for every edge ${e} = (u, v)$ of $\Gamma$.
\end{thm}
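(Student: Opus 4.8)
The plan is to write $G$ for the group defined by the displayed presentation, to build a homomorphism $\psi\colon G\to A_\Gamma$ by $e=(u,v)\mapsto uv^{-1}$ (where $u<v$), to show that $\psi$ carries $G$ onto $H_\Gamma$, and then to prove that $\psi$ is injective; since $\psi$ is defined by $e\mapsto uv^{-1}$, once it is known to be an isomorphism onto $H_\Gamma$ the ``moreover'' clause is automatic. That $\psi$ is well defined is a short check: for a directed triangle $(e,f,g)$ with $e=(u,v)$, $f=(v,w)$, $g=(u,w)$ and $u<v<w$, the vertices $u,v,w$ pairwise commute in $A_\Gamma$, so $(uv^{-1})(vw^{-1})=uw^{-1}=(vw^{-1})(uv^{-1})$, which is both the commuting relation and the relation $ef=g$, and each $uv^{-1}$ lies in $\ker\phi=H_\Gamma$. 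For surjectivity let $N\le A_\Gamma$ be the subgroup generated by the elements $uv^{-1}$ with $(u,v)\in E(\Gamma)$; connectedness of $\Gamma$ gives $xy^{-1}\in N$ for all vertices $x,y$ by telescoping along a path, every vertex $w$ then normalises $N$ via $w(uv^{-1})w^{-1}=(wv^{-1})(uv^{-1})(vw^{-1})$ (using $[u,v]=1$), so $N\trianglelefteq A_\Gamma$, the quotient $A_\Gamma/N$ is cyclic (all vertices become equal) and is carried onto $\ZZ$ by $\phi$, hence $A_\Gamma/N\cong\ZZ$ and $N=\ker\phi=H_\Gamma$. Thus $\psi$ is onto, and everything reduces to injectivity, which is where the hypothesis that $\triangle_\Gamma$ is simply connected must be used.

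For injectivity I would pass to topology, following Bestvina--Brady \cite{BB}. Let $S_\Gamma$ be the Salvetti cube complex of $A_\Gamma$ (one vertex, a loop for each vertex of $\Gamma$, a $k$-cube for each $k$-clique), so $\pi_1(S_\Gamma)=A_\Gamma$ and the universal cover $\widetilde S$ is a contractible CAT(0) cube complex with a free cocompact $A_\Gamma$-action. The map $S_\Gamma\to S^1$ wrapping each loop once around realises $\phi$; lifting it gives an $A_\Gamma$-equivariant Morse function $f\colon\widetilde S\to\mathbb R$, affine on cubes and integer-valued on vertices. Since $\phi(H_\Gamma)=0$, the group $H_\Gamma$ preserves each level set $Z:=f^{-1}(t)$ (choose $t$ a half-integer), acts on it freely and properly, and $Z/H_\Gamma$ is compact, being a level set of the infinite cyclic cover $\widetilde S/H_\Gamma\to S_\Gamma$. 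A direct computation identifies both the ascending link and the descending link of $f$ at every vertex of $\widetilde S$ with the flag complex $\triangle_\Gamma$ (here one uses that $\phi$ sends every generator to $1$); hence, by the Morse-theoretic argument of \cite{BB}, simple connectivity of $\triangle_\Gamma$ forces $Z$ to be connected and simply connected. Therefore $Z\to Z/H_\Gamma$ is a universal cover and $\pi_1(Z/H_\Gamma)\cong H_\Gamma$, and it remains to read a presentation off the $2$-skeleton of the compact complex $Z/H_\Gamma$: its $1$-cells arise from the squares of $S_\Gamma$ (hence from the edges of $\Gamma$), with the edge $(u,v)$ giving rise to the class $uv^{-1}$, and its $2$-cells arise from the $3$-cubes (hence from the triangles of $\Gamma$); a Tietze reduction of the resulting presentation should collapse it to $\langle\,e\in E(\Gamma)\mid ef=fe,\ ef=g\text{ for directed triangles}\,\rangle$, with $e=(u,v)$ going to $uv^{-1}$, which is the assertion together with the ``moreover''.

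The step I expect to be the real obstacle is that last Tietze reduction. The complex $Z/H_\Gamma$ does not come with the economical cell structure one wants: a half-integral level meets the cubes of $S_\Gamma$ in midpoints, segments and hexagons, so $Z/H_\Gamma$ has two $1$-cells for each edge of $\Gamma$ and three $2$-cells for each triangle, and one must perform --- uniformly, and with the orientations and the linear order on $V(\Gamma)$ tracked carefully --- the collapse that identifies the $\pi_1$-presentation of $Z/H_\Gamma$ with the Dicks--Leary one. A purely algebraic alternative, closer to \cite{DL}, would instead start from the Reidemeister--Schreier presentation of $H_\Gamma$ relative to the transversal $\{v_0^n:n\in\ZZ\}$ and carry out the analogous Tietze reduction directly; in that route the hypothesis on $\triangle_\Gamma$ enters through the fact that every cycle in $\Gamma$ bounds a disc of triangles, which is what makes the relators coming from long cycles consequences of the triangle relators. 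Either way, it is the combinatorial reduction, rather than the Morse theory, that is the crux.
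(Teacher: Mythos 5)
First, context: the paper does not prove this statement at all --- it is quoted from Dicks and Leary \cite{DL} (Theorem~1 and Corollary~3 there), whose argument is algebraic, a Reidemeister--Schreier computation followed by a careful elimination of redundant generators and relators. So the comparison is between your sketch and what a complete proof actually requires.

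The parts of your argument that are carried out are correct, but they are the easy parts. The map $\psi\colon G\to A_\Gamma$, $e=(u,v)\mapsto uv^{-1}$, is well defined and lands in $H_\Gamma$; your surjectivity argument (the subgroup $N$ generated by the elements $uv^{-1}$ is normal, the quotient is cyclic and surjects onto $\ZZ$, hence $N=\ker\phi$) is clean and correct, and it uses only connectivity of $\Gamma$, which indeed follows from simple connectivity of $\triangle_\Gamma$. The genuine gap is exactly where you locate it: injectivity of $\psi$, i.e.\ the claim that the listed triangle relations generate \emph{all} relations among the elements $uv^{-1}$. Your Morse-theoretic setup correctly reduces this to computing $\pi_1$ of the compact level-set quotient $Z/H_\Gamma$ (the ascending and descending links are copies of $\triangle_\Gamma$, so by \cite{BB} the level set is connected and simply connected and $\pi_1(Z/H_\Gamma)\cong H_\Gamma$), but the passage from the actual cell structure of $Z/H_\Gamma$ --- two $1$-cells per edge of $\Gamma$ and three $2$-cells per triangle, attached via midpoints, segments, triangles and hexagons --- down to the stated presentation is precisely the content of the theorem, and ``a Tietze reduction should collapse it'' is an assertion, not a proof. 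Nothing in the sketch verifies that the hexagonal $2$-cells impose only consequences of $ef=fe$ and $ef=g$, nor that no further independent relations arise from how $2$-cells attach across triangles sharing an edge, nor how the linear order on $V(\Gamma)$ and the edge orientations enter to produce exactly the directed-triangle relators. The same gap persists in your proposed algebraic alternative, where the Reidemeister--Schreier rewriting and the subsequent elimination are likewise left unexecuted. In short: you have a correct reduction of the hard half of the statement to a combinatorial computation, but that computation is the theorem, and it is not done.
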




The Dicks-Leary presentation is not necessarily a minimal presentation, i.e., there are some redundant generators. Dicks-Leary considered all the edges of $\Gamma$ as the generators.
The simpler presentation was given by Papadima-Suciu in ~\cite{PS}. The authors proved that for the generators of $H_\Gamma$ it is enough to consider the edges of a spanning tree of $\Gamma$.

\begin{thm}[{\cite[Corollary 2.3]{PS}}]\label{Suciu-Papadima presentation}
If $\triangle_\Gamma$ is simply-connected, then $H_\Gamma$ has a presentation $H_\Gamma = F/R$, where $F$ is the free group generated by the edges of a spanning tree of $\Gamma$, and $R$ is a finitely generated normal subgroup of the commutator group $[F, F]$.
\end{thm}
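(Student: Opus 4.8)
The plan is to obtain the stated presentation from the Dicks--Leary presentation by controlling only two things: a spanning set, and the abelianization. Since the assertion is purely structural --- the generating set is the edge set of a spanning tree $T$, and the relations form a normally finitely generated subgroup of $[F,F]$ --- there is no need to write the relations down explicitly. Fix a spanning tree $T$ of $\Gamma$, so $|E(T)| = |V(\Gamma)| - 1$. Let $F$ be the free group on $E(T)$, and let $\pi\colon F \to H_\Gamma$ be the homomorphism sending a tree edge $e = (u,v)$ to the element that the inclusion $\iota$ of the Dicks--Leary theorem carries to $uv^{-1} \in A_\Gamma$. One must check: (i) $\pi$ is onto; (ii) $\ker \pi \subseteq [F,F]$; (iii) $\ker \pi$ is the normal closure in $F$ of a finite set.

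For (i), work inside $A_\Gamma$ through $\iota$, so that $H_\Gamma$ is generated by $\{uv^{-1} : (u,v) \in E(\Gamma)\}$. Root $T$ at a vertex $v_0$ and let $K \leq H_\Gamma$ be the subgroup generated by the images of $E(T)$. Telescoping along the unique $T$-path from $v_0$ to a vertex $v$ gives $v_0 v^{-1} \in K$ for every $v \in V(\Gamma)$, and hence $uv^{-1} = (v_0 u^{-1})^{-1}(v_0 v^{-1}) \in K$ for every edge $(u,v) \in E(\Gamma)$. Thus $K = H_\Gamma$, so $\pi$ is surjective; set $R := \ker\pi$.

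For (ii), abelianize the Dicks--Leary presentation: the commutator relations disappear, while a directed-triangle relation $ef = g$ with $e = (u,v)$, $f = (v,w)$, $g = (u,w)$, $u<v<w$, becomes $e + f - g = \partial_2(\sigma)$ for the corresponding $2$-simplex $\sigma$ of $\triangle_\Gamma$. Hence $H_\Gamma^{\mathrm{ab}} \cong C_1(\triangle_\Gamma;\ZZ)/\im \partial_2$, which sits in a short exact sequence $0 \to H_1(\triangle_\Gamma;\ZZ) \to H_\Gamma^{\mathrm{ab}} \to \im \partial_1 \to 0$. Simple-connectivity of $\triangle_\Gamma$ kills $H_1(\triangle_\Gamma;\ZZ)$, and connectivity identifies $\im \partial_1$ with $\ker(C_0 \to \ZZ) \cong \ZZ^{|V(\Gamma)|-1}$, which is free; so $H_\Gamma^{\mathrm{ab}} \cong \ZZ^{|V(\Gamma)|-1}$. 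By (i) this group is generated by the classes of the $|E(T)| = |V(\Gamma)|-1$ tree edges, and a generating set of that size for $\ZZ^{|V(\Gamma)|-1}$ is automatically a basis; therefore $F^{\mathrm{ab}} \to H_\Gamma^{\mathrm{ab}}$ is an isomorphism, which forces $R \subseteq \ker(F \to F^{\mathrm{ab}}) = [F,F]$.

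For (iii), the Bestvina--Brady theorem quoted above shows $H_\Gamma$ is finitely presented (this is where simple-connectivity is used a second time); and whenever a finitely presented group is realized as a quotient of a finitely generated free group, the kernel is normally finitely generated --- compare a finite presentation of $H_\Gamma$ with $\pi$, transport the finitely many relators across, and absorb the difference of the two change-of-generators maps. Applied to $\pi$, this yields (iii). The one step that takes real care is the homological computation in (ii): it is exactly the hypothesis $H_1(\triangle_\Gamma) = 0$ that collapses the a priori larger Dicks--Leary generating set onto a spanning tree. If one wanted the sharper statement with explicit normal generators of $R$ --- which the theorem as stated does not require --- one would instead run Tietze transformations, eliminating each non-tree edge $g$ by rewriting it as an actual word in the tree edges obtained from a $2$-simplex filling in $\triangle_\Gamma$ of the loop that $g$ forms with its $T$-path; that bookkeeping is the genuinely delicate part.
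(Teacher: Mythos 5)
Your proof is correct, and it is worth saying up front that the paper offers no proof of its own here: the statement is imported verbatim from \cite[Corollary 2.3]{PS}, so the meaningful comparison is with the source. Papadima and Suciu argue constructively: starting from the Dicks--Leary presentation on all of $E(\Gamma)$, they eliminate each non-tree edge by Tietze transformations, solving directed-triangle relations $ef=g$ for the non-tree edge and using simple connectivity of $\triangle_\Gamma$ to reach every edge from the spanning tree; this leaves the tree edges as generators and an explicit finite list of relators, each visibly a commutator. Your route is softer and avoids that bookkeeping: surjectivity of $F\to H_\Gamma$ by telescoping along tree paths; $R\subseteq[F,F]$ by identifying $H_\Gamma^{\mathrm{ab}}\cong C_1(\triangle_\Gamma;\ZZ)/\im\partial_2\cong\ZZ^{|V(\Gamma)|-1}$ (this is where $H_1(\triangle_\Gamma)=0$ enters) and using that a generating set of minimal cardinality for $\ZZ^{n}$ is a basis; and normal finite generation of $R$ from finite presentability of $H_\Gamma$ via the standard fact that finite presentability is independent of the chosen finite generating set. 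All three steps are sound, and in (iii) you have correctly read ``finitely generated normal subgroup'' as ``normally finitely generated'' --- the literal reading would be false whenever $\Gamma$ contains a triangle, since by the Greenberg/Karrass--Solitar theorem a finitely generated normal subgroup of infinite index in a free group is trivial. What your argument does not deliver, and what the constructive proof buys, is the explicit shape of the relators: the rest of the paper (notably the proof of \Cref{Chang's main thm}, which needs the relators to be exactly the commutators $[e_i,e_j]$ of tree edges when every triangle is favourable, and the presentation comparisons in \cref{basic decomp}) silently relies on that explicit presentation, which your proof only reaches through the deferred Tietze computation you mention at the end.
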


Here are some examples of B-B groups. 

\begin{exam}
If $\Gamma$ is a complete graph on $n$ vertices, then any spanning tree has $n-1$ edges. In fact, we can choose the spanning tree as a star graph. Moreover, any two edges in the spanning tree form the two sides of a triangle in $\Gamma$. 
The corresponding Bestvina--Brady group has $n-1$ generators and any two of them commute, hence it is $\ZZ^{n-1}$. 
\end{exam}

\begin{exam}\label{free gp BB}
Now let $\Gamma$ be a tree on $n$ vertices.
The spanning tree is the graph $\Gamma$ itself and there are no triangles. 
The corresponding Bestvina--Brady group has $n-1$ generators and none of them commute, hence it is $F_{n-1}$ the free group on $n-1$ generators.
\end{exam}

\begin{exam}
Let $\Gamma$ be the graph in \Cref{fig:exam}. 
Choosing the spanning tree $T = \{e_1 , \ldots , e_5\}$ as indicated, the presentation of the B-B group reads as follows:
\[
H_\Gamma = \langle e_1,  \ldots, e_5 \mid [e_1 , e_2], [e_2 , e_3], [e_3 , e_4], e_5{e_2}^{-1}e_3 = {e_2}^{-1}e_3 e_5 \rangle.
\]
Unlike the previous two examples, this particular $H_\Gamma$ is not isomorphic to any RAAG (see \cite[Proposition 9.4]{PS} for details). 
\begin{figure}[H]
    \centering
\begin{tikzpicture}[shorten >=1pt,node distance=20cm,auto]
\tikzset{
    edge/.style={draw=black,postaction={on each segment={mid arrow=black}}}
}
\node[fill=black!100, state, scale=0.10, vrtx/.style args = {#1/#2}{label=#1:#2}] (1) [vrtx=left/$v_6$] {};

\node[fill=black!100, state, scale=0.10, vrtx/.style args = {#1/#2}{label=#1:#2}] (2) [vrtx=right/$v_4$] [ below right of = 1] {};

\node[fill=black!100, state, scale=0.10, vrtx/.style args = {#1/#2}{label=#1:#2}] (3) [vrtx=left/$v_5$] [ below left of = 1] {};

\node[fill=black!100, state, scale=0.10, vrtx/.style args = {#1/#2}{label=#1:#2}] (4) [vrtx=right/$v_3$] [ below right of = 2] {};

\node[fill=black!100, state, scale=0.10, vrtx/.style args = {#1/#2}{label=#1:#2}] (5) [vrtx=below/$v_2$] [ below left of = 2] {};

\node[fill=black!100, state, scale=0.10, vrtx/.style args = {#1/#2}{label=#1:#2}] (6) [vrtx=left/$v_1$] [ below left of = 3] {};


\draw[edge] (6) -- (5) node[midway, below] {$e_1$};
\draw[edge] (5) -- (3) node[midway, left] {$e_2$};
\draw[edge] (5) -- (2) node[midway, right] {$e_3$};
\draw[edge] (5) -- (4) node[midway, below] {$e_4$};
\draw[edge] (2) -- (1) node[midway, right] {$e_5$};
\draw[edge] (6) -- (3);
\draw[edge] (3) -- (2);
\draw[edge] (3) -- (1);
\draw[edge] (4) -- (2);
\end{tikzpicture}
\caption{A graph whose corresponding B-B group is \textit{not} a RAAG.}
\label{fig:exam}
\end{figure}
\end{exam}

Here we also recall the definition of an amalgamated product. 

\begin{defn}
Let $G_1$ and $G_2$ be groups with distinguished isomorphic subgroups $H \leqslant G_1$ and
$K \leqslant G_2$. Fix an isomorphism $\varphi \colon H \rightarrow K$. The free product of $G_1$ and $G_2$ with amalgamation of $H$ and $K$ by the isomorphism $\varphi$ is the quotient of $G_1*G_2$ by the normal closure of the set $\{\varphi(h)h^{-1} \mid h \in H \}$. We will refer to this factor group briefly as the \textit{amalgamated product} and have the following notations:
\[
\langle G_1 * G_2 \mid h=\varphi(h), h \in H\rangle,\phantom{aa}
G_1 *_{H=K} G_2,\phantom{aa}
G_1 *_H G_2.
\]
\end{defn}

\section{On the structure of B-B groups}\label{bb structure}

As stated earlier the aim of this article is to describe the structure of Bestvina-Brady groups and we do this, up to some extent, in this section. 
However, we start by focusing at a class of B-B groups that are in fact isomorphic to some RAAG. 
These type of B-B groups can be recognized from the graph defining the ambient RAAG. Then we move to the \textit{iterated amalgamated product} structure having the very first factor group isomorphic to an arbitrary RAAG and other factors isomorphic to $\Z^2$ (which are also RAAGs). Most importantly, the iterated amalgamated product structure of the B-B group can also be derived from the graph defining its ambient RAAG. 

\subsection{Isomorphism between RAAGs and B-B groups}

Since the only relations in B-B groups are the commuting relations, for a given $\Gamma$ it is natural to ask whether $H_\Gamma$ isomorphic to $A_{\Gamma'}$ for some finite simplicial graph $\Gamma'$.
This question was first considered by Papadima and Suciu in~\cite{PS}. 
They also constructed a family of B-B groups not isomorphic to any RAAGs.

We briefly review their result; first we recall the definition of \textit{special} and \textit{extra-special triangulation} (\Cref{triangulation}).

\begin{defn}
A triangulation  of the $2$-disk $D^2$ is said to be \textit{special} if it can be obtained from a triangle by adding one triangle at a time, along a unique boundary edge.
A triangulation of $D^2$ is called \textit{extra-special} if it is obtained from a special triangulation, by adding one triangle along each boundary edge (see Figure~\ref{triangulation}).
\end{defn}

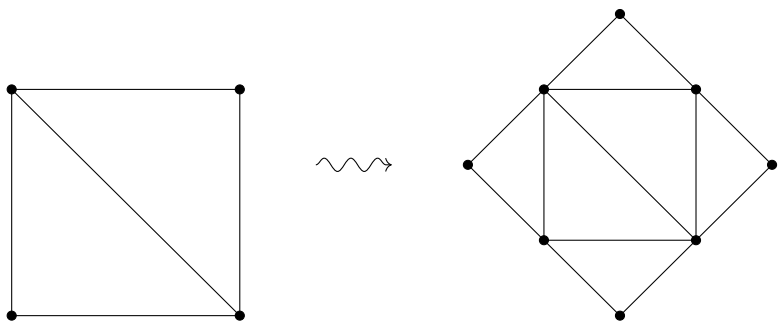
\begin{figure}[H]
    \centering
   \begin{tikzpicture}
    \draw (0,0) -- (3,0) -- (3,3) -- (0,3) -- (0,0);
   \draw (3,0) -- (0,3);
    \draw [->,snake=snake] (4,2) -- (5,2);
    \draw (8,0) -- (10,2) -- (8,4) -- (6,2) -- (8,0);
    \draw (7,1) -- (9,1) -- (9,3) -- (7,3) -- (7,1);
    \draw (7,3) -- (9,1);
\node [fill=black!100,circle,scale=0.3,draw] at (0,0) {};
\node [fill=black!100,circle,scale=0.3,draw] at (3,0) {};
\node [fill=black!100,circle,scale=0.3,draw] at (3,3) {};
\node [fill=black!100,circle,scale=0.3,draw] at (0,3) {};
\node [fill=black!100,circle,scale=0.3,draw] at (8,0) {};
\node [fill=black!100,circle,scale=0.3,draw] at (10,2) {};
 \node [fill=black!100,circle,scale=0.3,draw] at (8,4) {};
 \node [fill=black!100,circle,scale=0.3,draw] at (6,2) {};
\node [fill=black!100,circle,scale=0.3,draw] at (7,1) {};
\node [fill=black!100,circle,scale=0.3,draw] at (9,1) {};
\node [fill=black!100,circle,scale=0.3,draw] at (9,3) {};
\node [fill=black!100,circle,scale=0.3,draw] at (7,3) {};
    \end{tikzpicture}
    \caption{Building an extra-special triangulation of the disk.}
    \label{triangulation}
\end{figure}

\begin{prop}[{\cite[Proposition 9.4]{PS}}]\label{prop ps}
Let $\Gamma$ be the $1$-skeleton of an extra-special triangulation of
$D^2$. 
Then the corresponding Bestvina--Brady group $H_\Gamma$ is not isomorphic to any Artin group.
\end{prop}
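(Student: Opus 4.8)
The plan is to reduce the statement to the right-angled case. Precisely: \textbf{(A)} I would first show that any Artin group isomorphic to a finitely presented Bestvina--Brady group must be a RAAG, and then \textbf{(B)} invoke the theorem of Papadima--Suciu \cite{PS} recorded above, that $H_\Gamma$ for an extra-special $\Gamma$ is not isomorphic to any RAAG. Combining (A) and (B): if $H_\Gamma\cong A_\Phi$ for some Artin group $A_\Phi$, then by (A) $A_\Phi$ is a RAAG, contradicting (B); hence $H_\Gamma$ is isomorphic to no Artin group at all.

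For (A) the key is bi-orderability. The group $H_\Gamma$ is a subgroup of the RAAG $A_\Gamma$, and RAAGs are bi-orderable (they are residually torsion-free nilpotent, by Duchamp--Krob, and a residually torsion-free nilpotent group embeds into a product of torsion-free nilpotent groups, which are bi-orderable by Mal'cev); since bi-orderability passes to subgroups, $H_\Gamma$ is bi-orderable. So it suffices to prove that a bi-orderable Artin group is right-angled. Suppose the labelled defining graph of $A_\Phi$ carries an edge $\{s,t\}$ with label $m\geq 3$. By van der Lek's theorem on standard parabolic subgroups, $\langle s,t\rangle\leq A_\Phi$ is isomorphic to the dihedral Artin group $A(m)=\langle a,b\mid \mathrm{prod}_m(a,b)=\mathrm{prod}_m(b,a)\rangle$, so $A(m)$ would be bi-orderable. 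But $A(m)$, being of spherical type $I_2(m)$, has infinite cyclic centre $Z$ (generated by the Garside element $\Delta_m$ for $m$ even and by $\Delta_m^2$ for $m$ odd), and $A(m)/Z$ has torsion: using $\Delta_m=(ab)^{m/2}$ for $m$ even one reads off $A(m)/Z\cong\ZZ\ast\ZZ/(m/2)$, and for $m$ odd $A(m)/Z\cong\ZZ/2\ast\ZZ/m$, both of which contain torsion once $m\geq 3$. This contradicts the elementary fact that the centre of a bi-orderable group is isolated: if $g^{k}\in Z(G)$ then for every $h$ the sequence $h,\ ghg^{-1},\ g^{2}hg^{-2},\dots$ is strictly monotone, or constant, for the bi-invariant order, yet it returns to $h$ at step $k$, hence it is constant and $g\in Z(G)$. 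So $A_\Phi$ has no edge of label $\geq 3$, i.e.\ it is a RAAG.

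For (B) one quotes the computation of Papadima--Suciu. They separate $H_\Gamma$ from every RAAG using an algebraic invariant sensitive to the cohomology-ring and Alexander-module structure --- concretely, the first resonance variety $\mathcal R^{1}$ (equivalently the $I$-adic Alexander invariant). For a RAAG, $\mathcal R^{1}$ is always a union of \emph{coordinate} linear subspaces, indexed by the disconnected induced subgraphs; by contrast, adjoining a triangle along \emph{each} boundary edge of a special triangulation --- the step from ``special'' to ``extra-special'' --- forces a component of $\mathcal R^{1}(H_\Gamma)$ that is linear but \emph{not} a coordinate subspace. This anomalous component reflects the cyclic amalgam of copies of $\ZZ^{2}$ that the main theorem of the present paper isolates inside $H_\Gamma$, and no RAAG can exhibit such a component.

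I expect the reduction (A) to be routine: it rests only on standard facts --- bi-orderability of RAAGs and their subgroups, van der Lek's parabolic-subgroup theorem, the centre of a spherical-type Artin group, and the isolated-centre property of bi-orderable groups --- the one small check being that $A(m)/Z$ genuinely contains torsion for each $m\geq 3$, which is read off the presentation as above. The main obstacle is (B): one must compute the resonance variety (or Alexander invariant) of $H_\Gamma$ from the Dicks--Leary presentation \cite{DL} via Fox calculus precisely enough to exhibit a non-coordinate component, and then compare this against the purely combinatorial description of $\mathcal R^{1}$ for RAAGs to conclude that none of them matches $H_\Gamma$.
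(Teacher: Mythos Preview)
The paper does not prove this proposition at all: it is quoted verbatim from \cite[Proposition~9.4]{PS} and used as background and motivation for the amalgamation results in Section~\ref{bb structure}. So there is no ``paper's own proof'' to compare against; your proposal is supplying an argument where the present paper simply cites one.

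That said, your sketch is sound and worth a brief comment. Part~(B) is an accurate summary of what Papadima and Suciu actually do in \cite{PS}: they compute the first resonance variety of $H_\Gamma$ from the Dicks--Leary presentation and observe that the extra-special construction forces a linear component that is not a coordinate subspace, whereas for any RAAG every component of $\mathcal{R}^1$ is a coordinate subspace determined by a disconnected induced subgraph. Part~(A) is a clean reduction that is not in \cite{PS} as far as the present paper records it, and it is correct: RAAGs are residually torsion-free nilpotent (Duchamp--Krob), hence bi-orderable, hence so is $H_\Gamma$; van der Lek's theorem gives $A(m)\hookrightarrow A_\Phi$ for any edge of label $m$; and your ``isolated centre'' observation (if $g^k\in Z(G)$ in a bi-ordered group then the sequence $g^nhg^{-n}$ is monotone yet periodic, forcing $g\in Z(G)$) shows $G/Z(G)$ is torsion-free, which rules out the dihedral Artin groups $A(m)$ for $m\geq 3$ since $A(m)/Z(A(m))\cong \ZZ\ast\ZZ/(m/2)$ for $m$ even and $\ZZ/2\ast\ZZ/m$ for $m$ odd. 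One caution: be sure of the exact wording of \cite[Proposition~9.4]{PS}. The present paper states it as ``any Artin group'' here but refers to it as ``any right-angled Artin group'' elsewhere (e.g.\ after \Cref{fig:main fig}); if the original is only about RAAGs, your (A) is a genuine strengthening, while if \cite{PS} already treats general Artin groups, your (A) is a pleasant alternative route but not logically needed.
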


Papadima and Suciu \cite{PS} also describe an explicit presentation of $H_\Gamma$, where $\Gamma$ is the $1$-skeleton of a special triangulation of $D^2$.

\begin{lem}
Let $\Gamma = (V, E)$ be the $1$-skeleton of a special triangulation of $D^2$, then:
\begin{itemize}
    \item[(i)] $2|V| - |E| = 3$,
    \item[(ii)] $H_\Gamma$ admits a presentation with $|V|-1$ generators and $|V| - 2$ commutator relators.
\end{itemize}
\end{lem}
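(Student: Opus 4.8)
The plan is to establish (i) by induction on the number of $2$-simplices, and then to deduce (ii) from the Dicks--Leary presentation by performing one Tietze elimination per $2$-simplex. For (i): a single triangle has $|V|=|E|=3$, so $2|V|-|E|=3$; attaching one further triangle along a boundary edge of the current disk creates exactly one new vertex and two new edges, leaving $2|V|-|E|$ unchanged. Since a special triangulation is by definition obtained from a triangle by finitely many such attachments, $2|V|-|E|=3$ always.

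Next I would set up the bookkeeping for (ii). If $T$ is a spanning tree of $\Gamma$ then $|E(T)|=|V|-1$, so by (i) the number of edges of $\Gamma$ lying outside $T$ is $|E|-|V|+1=|V|-2$. I would then check that $\triangle_\Gamma$ has exactly $|V|-2$ triangles: the last-added vertex of any $3$-clique is adjacent only to the two endpoints of the edge it was glued along, each edge of $\Gamma$ carries at most one attached triangle, and no edges are ever created among previously existing vertices, so every $3$-clique of $\Gamma$ is a face of the triangulation, and counting faces gives $1+(|V|-3)=|V|-2$. Finally I would choose $T$ adapted to the construction: put two of the three edges of the initial triangle in $T$ (so its third edge $g_0$ stays out), and for each triangle attached afterwards put exactly one of its two new edges in $T$; this gives $2+(|V|-3)=|V|-1$ edges and is visibly connected and acyclic, hence a spanning tree.

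Now I would start from the Dicks--Leary presentation, with generators all edges of $\Gamma$ and relators the commutator $[e,f]$ together with $efg^{-1}$ for each directed triangle $\triangle(e,f,g)$, and eliminate the $|V|-2$ non-tree edges one at a time by Tietze transformations, processing the triangles in the order in which they were attached (the initial one first, where $efg^{-1}$ removes $g_0$). The induction is that after treating the initial triangle and the first $i$ attached triangles, every edge of the disk built at that stage has been rewritten as a word in $E(T)$: the ``old'' edge of the next attached triangle already lies in that disk, hence is such a word by hypothesis, while exactly one of its two new edges is non-tree, and the relation $ef=g$ expresses that edge in terms of the other two edges of the triangle. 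After all $|V|-2$ eliminations the generating set is $E(T)$, of size $|V|-1$, and the surviving relators are the $|V|-2$ commutators $[e,f]$ --- commutators of words in the tree generators, but commutators all the same, in agreement with \Cref{Suciu-Papadima presentation}.

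I expect the main difficulty to be organisational rather than conceptual: pinning down the order of the Tietze moves so that at each step the edge being removed is genuinely a word in the current generators, and tracking that each triangle's commuting relator is carried along as an honest (in general conjugated) commutator, exactly as the relator $[e_5,e_2^{-1}e_3]$ appears in the worked example above. The one genuinely combinatorial input is that a special triangulation has no $3$-cliques besides its $2$-faces, which is what fixes the relator count at $|V|-2$.
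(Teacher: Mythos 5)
The paper itself gives no proof of this lemma --- it is quoted from Papadima--Suciu --- so there is no in-house argument to compare against; judged on its own, your proof is correct and is essentially the standard derivation. Part (i) via the invariance of $2|V|-|E|$ under attaching a triangle along one boundary edge is fine, and for (ii) you correctly isolate the one combinatorial point that actually needs checking, namely that every $3$-clique of $\Gamma$ is a $2$-face of the triangulation (so the triangle count, and hence the commutator count, is $|V|-2$), and you justify it correctly by observing that each attachment creates no edges between pre-existing vertices. The Tietze bookkeeping also closes up: each triangle is assigned exactly one non-tree edge (the initial triangle's omitted edge, respectively the one new edge per attachment left out of $T$), this assignment is a bijection onto $E(\Gamma)\setminus E(T)$, and processing triangles in attachment order guarantees the glued edge is already a word in $E(T)$ when its triangle's relation $ef=g$ is used for elimination. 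The only cosmetic quibble is the word ``conjugated'': after substitution the surviving relators are commutators of words in the tree generators (as in $[e_5,e_2^{-1}e_3]$), not conjugates of commutators of generators, which is exactly what the lemma's phrase ``commutator relators'' means here.
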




Now we identify a class of graphs such that the corresponding B-B group is isomorphic to some RAAG.
The complete classification of graphs such that the corresponding (finitely presented) B-B group is a RAAG can be found in the recent paper of Chang and Ruffoni.  
In particular, they prove that such graphs admit a \emph{tree $2$-spanner} \cite[Theorem A]{Chang-Ruffoni}. 
Since we do not need the full extent of their result we provide a proof, for the benefit of the reader, of the sufficiency condition without introducing any more technical definitions. 
The authors sincerely thank Chang and Ruffoni for bringing their work to our notice.

\begin{thm}\label{iso thm}
Let $\Gamma$ be a finite simplicial graph such that $\triangle_\Gamma$ is simply-connected and
$\Gamma$ has a spanning tree $T$ such that each triangle of $\Gamma$ has $2$ edges in $E(T)$. 
Then $H_\Gamma \cong A_{\Gamma'}$ for some finite simplicial graph $\Gamma'$.
\end{thm}

\begin{proof}
Let $T$ be a spanning tree of $\Gamma$
such that each triangle of $\Gamma$ has exactly $2$ edges in $E(T)$. 
Let $E(T) = \{e_1, e_2, \ldots , e_n \}$. 
From \Cref{Suciu-Papadima presentation}, $E(T)$ corresponds to a generating set of $H_\Gamma$.
Since $H_\Gamma$ is finitely presented and that each triangle has $2$ edges in $E(T)$, each relator is of the form $[e_i, e_j]$. 

We construct the graph $\Gamma'$ as follows: the vertex set $V(\Gamma')$ is $E(T)$; and two vertices in $V(\Gamma')$ are adjacent whenever the corresponding edges form a triangle in $\Gamma$, i.e., they commute in $H_\Gamma$. Let us denote $V(\Gamma')=\{v'_1, v'_2, \ldots, v'_n \}$, and let $\varphi \colon E(T) \rightarrow V(\Gamma')$ be a map sending each $e_i$ to $v_i$ for $i=1, 2, \ldots, n$. Clearly, $\varphi$ is a bijection. By the construction of $\Gamma'$, we have
$\varphi([e_i, e_j]) = [v'_i, v'_j]$ and $\varphi^{-1}([v'_i, v'_j]) = [e_i , e_j]$. Thus, $\varphi$ is our desired isomorphism between $H_\Gamma$ and $A_{\Gamma'}$.
\end{proof}


\begin{exam}
Let $\Gamma = v\vee\Gamma'$ be a cone graph with $v$ as its dominating vertex. 
A spanning tree can be chosen to be the star graph consisting of consists in $v$, $V(\Gamma')$ and all the edges joining $v$ to each vertex of $\Gamma'$. 
Every edge in $\Gamma'$ and the two edges that connect each of its boundary vertices to $v$ form a triangle. 
Hence, $H_{\Gamma} \cong A_{\Gamma'}$. 
This example has also appeared in \cite[Example 2.5]{PS}.
\end{exam}

\begin{exam}
Let $\Gamma$ be the graph as shown in \Cref{favourable graph}. 
Let $T$ be the spanning tree with edges $e_1$, $e_2$, $e_3$, $e_4$ and $e_5$. 
By \Cref{iso thm}, $H_\Gamma$ is isomorphic to $A_{\Gamma'}$, where $\Gamma'$ is a line graph with vertices $w_1, w_2, w_3, w_4 \text{ and } w_5$ as shown
on the right hand side of \Cref{favourable graph}. 
Note that, $\Gamma$ is not a cone graph, however, $H_\Gamma$ is still isomorphic to $A_{\Gamma'}$.

\begin{figure}[H]
   $$
  \begin{array}{cc}
    \begin{tikzpicture}[shorten >=1pt,node distance=20cm,auto]
    \tikzset{
    edge/.style={draw=black,postaction={on each segment={mid arrow=black}}}
} 
\node[fill=black!100, state, scale=0.10, vrtx/.style args = {#1/#2}{label=#1:#2}] (1) [vrtx=below/$v_1$] {};
\node[fill=black!100, state, scale=0.10, vrtx/.style args = {#1/#2}{label=#1:#2}] (2) [vrtx=below/$v_2$] [right of = 1] {};
\node[fill=black!100, state, scale=0.10, vrtx/.style args = {#1/#2}{label=#1:#2}] (3) [vrtx=below/$v_3$] [right of = 2] {};
\node[fill=black!100, state, scale=0.10, vrtx/.style args = {#1/#2}{label=#1:#2}] (4) [vrtx=above/$v_4$] [above right of = 1] {};
\node[fill=black!100, state, scale=0.10, vrtx/.style args = {#1/#2}{label=#1:#2}] (5) [vrtx=above/$v_5$] [right of = 4] {};
\node[fill=black!100, state, scale=0.10, vrtx/.style args = {#1/#2}{label=#1:#2}] (6) [vrtx=above/$v_6$] [right of = 5] {};

\draw[edge] (1) -- (4) node[midway, above left] {$e_1$};
\draw[edge] (4) -- (2) node[midway, above right] {$e_2$};
\draw[edge] (2) -- (5) node[midway, below right] {$e_3$};
\draw[edge] (5) -- (3) node[midway, above right] {$e_4$};
\draw[edge] (3) -- (6) node[midway, below right] {$e_5$};
\draw[edge] (4) -- (5) -- (6);
\draw[edge] (1) -- (2) -- (3);
 \end{tikzpicture}
       & 
\phantom{aaa}       
\begin{tikzpicture}[shorten >=1pt,node distance=15cm,auto]
  
\node[fill=black!100, state, scale=0.10, vrtx/.style args = {#1/#2}{label=#1:#2}] (1) [vrtx=below/$w_1$] {};
\node[fill=black!100, state, scale=0.10, vrtx/.style args = {#1/#2}{label=#1:#2}] (2) [vrtx=below/$w_2$] [right of = 1] {};
\node[fill=black!100, state, scale=0.10, vrtx/.style args = {#1/#2}{label=#1:#2}] (3) [vrtx=below/$w_3$] [right of = 2] {};
\node[fill=black!100, state, scale=0.10, vrtx/.style args = {#1/#2}{label=#1:#2}] (4) [vrtx=below/$w_4$] [right of = 3] {};
\node[fill=black!100, state, scale=0.10, vrtx/.style args = {#1/#2}{label=#1:#2}] (5) [vrtx=below/$w_5$] [right of = 4] {};
\draw (1) -- (2) -- (3) -- (4) -- (5);
\end{tikzpicture}
  \end{array}
  $$
 \caption{A graph whose corresponding B-B group is a RAAG.}
    \label{favourable graph}
\end{figure}
\end{exam}

\begin{nt}\label{imp note on non-isomorphic}
Recall that two RAAGs $A_{\Gamma_1}$ and $A_{\Gamma_2}$ are isomorphic if and only if their underlying finite simplicial graphs $\Gamma_1$ and $\Gamma_2$ are isomorphic. 
However, this is not the case for B-B groups. 
For example, consider any two non-isomorphic trees on $n$ vertices for $n \geq 3$. 
In both the cases the corresponding B-B group is $F_{n-1}$. 
Let $\Gamma_1$, $\Gamma_2$ be cone graphs on $\Gamma'_1 \text{ and } \Gamma'_2$ respectively, then $\Gamma_1 \cong \Gamma_2$ if and only if $\Gamma'_1 \cong \Gamma'_2$. 
So if we restrict ourselves to cone graphs, then $\Gamma_1 \cong \Gamma_2$ if and only if $H_{\Gamma_1} \cong H_{\Gamma_2}$.
See \cite[Corollary 1]{Chang-Ruffoni} for the general result. 
\end{nt}

\subsection{B-B groups as an iterated amalgamated product}
In this subsection we will mainly focus on the triangles of $\Gamma$. 
More precisely, we will concentrate on how a particular triangle $\triangle$ of $\Gamma$ intersects its edge-set complement (see Def.~\ref{complement}). 
We introduce the notions of \textit{favourable} and \textit{unfavourable} triangles with respect to the chosen spanning tree $T$. Accordingly we also introduce the notions of \textit{favourable} and \textit{unfavourable} graphs.

\begin{defn}\label{complement}
Let $\Gamma$ be a connected graph and $\triangle$ be a triangle of $\Gamma$. Let $\Gamma'$ be the graph with $V(\Gamma')=V(\Gamma)$ and $E(\Gamma')=E(\Gamma)\setminus E(\triangle)$. Let $S$ be the isolated vertices of $\Gamma'$ and $V^c=V(\Gamma) \setminus S$. The \textit{edge-set ~complement} of $\triangle$ is the induced subgraph of $\Gamma$ generated by the set of vertices $V^c$. We will denote the edge-set complement of $\triangle$ in $\Gamma$ by $\triangle_\Gamma^c$.

\end{defn}

\begin{defn}
A triangle $\triangle$ of $\Gamma$ is said to be an \textit{internal triangle} if its intersection with the edge-set complement $\triangle_\Gamma^c$ is neither one vertex nor one edge.
A triangle $\triangle$ of $\Gamma$ is said to be a \textit{strictly internal triangle} if it is contained in an induced $K_n$, $n \geq 4$.
\end{defn}

In Figure~\ref{fig:graph with int. tri.} the triangles $\triangle(v_2, v_4, v_5)$ and $\triangle(v_2, v_3, v_4)$ are the internal triangles.
\begin{figure}[H]
    $$
    \begin{array}{ccc}
\begin{tikzpicture}[shorten >=1pt,node distance=17.5cm,auto]
\tikzset{
    edge/.style={draw=black,postaction={on each segment={mid arrow=black}}}
}
\node[fill=black!100, state, scale=0.10, vrtx/.style args = {#1/#2}{label=#1:#2}] (1) [vrtx=left/$v_6$] {};

\node[fill=black!100, state, scale=0.10, vrtx/.style args = {#1/#2}{label=#1:#2}] (2) [vrtx=above/$v_4$] [ below right of = 1] {};

\node[fill=black!100, state, scale=0.10, vrtx/.style args = {#1/#2}{label=#1:#2}] (7) [vrtx=right/$v_7$] [right of = 2] {};

\node[fill=black!100, state, scale=0.10, vrtx/.style args = {#1/#2}{label=#1:#2}] (3) [vrtx=left/$v_5$] [ below left of = 1] {};

\node[fill=black!100, state, scale=0.10, vrtx/.style args = {#1/#2}{label=#1:#2}] (4) [vrtx=right/$v_3$] [ below right of = 2] {};

\node[fill=black!100, state, scale=0.10, vrtx/.style args = {#1/#2}{label=#1:#2}] (5) [vrtx=below/$v_2$] [ below left of = 2] {};

\node[fill=black!100, state, scale=0.10, vrtx/.style args = {#1/#2}{label=#1:#2}] (6) [vrtx=left/$v_1$] [ below left of = 3] {};
\draw[edge] (5) -- (6) node[midway, below] {$e_1$};
\draw[edge] (5) -- (3);
\draw[edge] (5) -- (2);
\draw[edge] (5) -- (4);
\draw[edge] (1) -- (2) node[midway, right] {$e_4$};
\draw[edge] (6) -- (3) node[midway, left] {$e_2$};
\draw[edge] (3) -- (2);
\draw[edge] (3) -- (1) node[midway, left] {$e_3$};
\draw[edge] (2) -- (4);
\draw[edge] (2) -- (7) node[midway, above] {$e_5$};
\draw[edge] (7) -- (4) node[midway, right] {$e_6$};
\end{tikzpicture}
  

  \end{array}
  $$
\caption{A graph with internal triangles.}
\label{fig:graph with int. tri.}
\end{figure}

It is straightforward to note that all strictly internal triangles are internal.


\begin{defn}
Let $T$ be a spanning tree of $\Gamma$. 
A triangle $\triangle$ of $\Gamma$ is a \textit{favourable triangle}  with respect to $T$ if, either it has exactly $2$ edges in $E(T)$ or it is strictly internal.  
Otherwise, we say that $\triangle$ is \textit{unfavourable}.
\end{defn}

\begin{nt}
A graph $\Gamma$ can have several spanning trees. 
So, it may very well happen that a triangle $\triangle$ is \textit{favourable} in one spanning tree $T_1$ of $\Gamma$ but not favourable in another spanning tree $T_2$ of $\Gamma$. 
Hence to talk about favourable triangles we have to fix a spanning tree $T$ of $\Gamma$. 
We will choose a spanning tree  with the \emph{maximal} number of \textit{favourable} triangles or equivalently, the number of \textit{unfavourable} triangles is \textit{minimal}.

\end{nt}

Since by definition every strictly internal triangle of $\Gamma$ is favourable, strictly internal triangles of $\Gamma$ are always favourable with respect to any choice of  spanning tree of $\Gamma$. Hence, we define \textit{favourable graphs} as follows.

\begin{defn}
Let $\Gamma$ be a finite simplicial graph such that the flag complex on $\Gamma$ is simply-connected. 
$\Gamma$ is said to be a \textit{favourable graph} if there is a spanning tree $T$ such that each triangle of $\Gamma$ is favourable with respect to $T$.
On the other hand, if for every spanning tree $T$ of $\Gamma$, there exists at least one unfavourable triangle 
then $\Gamma$ is called an \textit{unfavourable graph}.
\end{defn}

The graph in \Cref{favourable graph} is favourable and the graph in \Cref{fig:exam} is unfavourable. 
In light of \Cref{iso thm}, if $\Gamma$ is a favourable graph, then the corresponding B-B group is isomorphic to a RAAG. 
However, it is important to note that an unfavourable graph doesn't mean that the corresponding B-B group is not isomorphic to any RAAG. 

\begin{obs}\label{crucial obs}
Any non-internal triangle can have $1$ edge or $2$ edges in any spanning tree. On the other hand, we know that if the triangle has $2$ edges in the spanning tree, then it is favourable with respect to that particular spanning tree.

Let $T$ be a chosen spanning tree of $\Gamma$ and $\triangle$ be a non-internal triangle. 
If the triangle $\triangle$ is unfavourable with respect to $T$, then it has exactly one edge in $T$. 
\end{obs}

Now some preparatory results.

\begin{lem}\label{one vertex lem}
If a triangle $\triangle$ of $\Gamma$ intersects the corresponding edge-set complement $\triangle_\Gamma^c$ in one vertex, then it is a favourable triangle irrespective of the choice of a spanning tree.
\end{lem}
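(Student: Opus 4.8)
The plan is to convert the hypothesis into a statement about vertex degrees and then run a short case analysis on how an arbitrary spanning tree can meet the three edges of $\triangle$.

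First I would unwind the definitions. Write $V(\triangle)=\{a,b,c\}$ with edges $ab$, $bc$, $ca$. By construction $\triangle_\Gamma^c$ contains none of the edges of $\triangle$, so the intersection $\triangle\cap\triangle_\Gamma^c$ is a discrete set of vertices, namely those vertices of $\triangle$ that are still non-isolated in $\Gamma\setminus E(\triangle)$. The hypothesis says exactly one such vertex survives; call it $a$. Then $b$ and $c$ are isolated in $\Gamma\setminus E(\triangle)$, which forces every $\Gamma$-edge at $b$ (and at $c$) to be an edge of $\triangle$; hence $\deg_\Gamma(b)=\deg_\Gamma(c)=2$, with neighbourhoods $\{a,c\}$ and $\{a,b\}$ respectively. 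Since $a$ is non-isolated in $\Gamma\setminus E(\triangle)$ and multi-edges are not allowed, $a$ has a neighbour outside $\{a,b,c\}$, so $|V(\Gamma)|\ge 4$; in particular every spanning tree $T$ of $\Gamma$ has more than one vertex, so every vertex of $\Gamma$ has $T$-degree at least $1$.

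Next I would fix an arbitrary spanning tree $T$ and bound the number of edges of $\triangle$ lying in $E(T)$. This number cannot be $3$, since $T$ is acyclic while $\triangle$ is a $3$-cycle. It remains to exclude the value $1$. Because $b$ has $T$-degree at least $1$ and its only $\Gamma$-edges are $ab,bc$, at least one of $ab,bc$ lies in $E(T)$; likewise, looking at $c$, at least one of $ca,bc$ lies in $E(T)$. If the unique $\triangle$-edge in $T$ were $ab$ the second constraint fails, and if it were $ca$ the first fails; so the only surviving possibility is $bc\in E(T)$ and $ab,ca\notin E(T)$. But then the only $T$-neighbour of $b$ is $c$ and the only $T$-neighbour of $c$ is $b$, so $\{b,c\}$ together with the edge $bc$ is an entire connected component of $T$, contradicting that $T$ is connected with at least $4$ vertices. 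Hence $\triangle$ has $0$ or $2$ edges in $E(T)$, i.e.\ $\triangle$ is favourable with respect to $T$; as $T$ was arbitrary this holds for every spanning tree.

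I do not expect a genuine obstacle here: the only delicate point is the bookkeeping around $\triangle_\Gamma^c$, namely checking that ``$\triangle\cap\triangle_\Gamma^c$ is a single vertex'' really does pin down the two non-apex vertices of $\triangle$ to have degree $2$ in $\Gamma$ and forces a fourth vertex into $\Gamma$ (ruling out degenerate spanning trees). Granting that, the argument is just the observation that a tree contains no cycle, combined with the three-way case split above.
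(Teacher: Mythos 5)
Your argument is correct and follows the same route as the paper's proof: the hypothesis forces the two non-apex vertices of $\triangle$ to have all their $\Gamma$-edges inside the triangle (the ``pendant'' picture), and then any spanning tree must pick up exactly two of the three triangle edges; you simply make explicit the case analysis that the paper dismisses as straightforward. One small caveat: your claim that $\triangle_\Gamma^c$ ``contains none of the edges of $\triangle$'' conflicts with how the paper later uses the definition (it speaks of a triangle meeting $\triangle_\Gamma^c$ in an edge), but under either reading the one-vertex hypothesis pins down the same configuration, so your proof is unaffected.
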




\begin{proof}
Let the triangle $\triangle$ intersect $\triangle_\Gamma^c$ in one vertex, say $v$. 
Then $v$ is a cut vertex. 
Hence, for any spanning tree $T$, $\triangle$ has exactly two edges in $E(T)$. 
\end{proof}

The following result is a direct consequence of  Dicks-Leary\cite{DL} presentation. We provide a proof for the benefit of the reader. 

\begin{lem}\label{edge element}
Let $\Gamma$ be a finite simplicial graph with  simply-connected flag complex. 
Then there is a group element in $H_{\Gamma}$ for each  $e\in E(\Gamma)$.  
\end{lem}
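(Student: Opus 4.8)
The plan is to attach to each edge the image of the corresponding Dicks--Leary generator under the inclusion $\iota\colon H_\Gamma\hookrightarrow A_\Gamma$, and then rewrite it through a fixed spanning tree. First I would recall that $\iota$ sends the generator labelled by an edge $e=(u,v)$ to the word $uv^{-1}\in A_\Gamma$. The crucial point is the computation $\phi(uv^{-1})=\phi(u)-\phi(v)=1-1=0$, so that $uv^{-1}\in\ker\phi=H_\Gamma$. This already produces, for every edge $e\in E(\Gamma)$ (whether or not it lies in a chosen spanning tree), a well-defined group element $z_e:=uv^{-1}$ of $H_\Gamma$.

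Next, to see that this element is genuinely accounted for inside the Papadima--Suciu presentation of $H_\Gamma$ — which only retains the tree edges as generators — I would fix a spanning tree $T$ of $\Gamma$ and, for a non-tree edge $e=(u,v)$, take the unique reduced path $u=x_0,x_1,\dots,x_m=v$ in $T$. Each consecutive pair $\{x_{i-1},x_i\}$ is an edge $f_i\in E(T)$; orienting $f_i$ by the fixed linear order on $V(\Gamma)$ and setting $\epsilon_i=+1$ if $x_{i-1}<x_i$ and $\epsilon_i=-1$ otherwise, I would telescope
\[
z_e \;=\; uv^{-1}\;=\;(x_0x_1^{-1})(x_1x_2^{-1})\cdots(x_{m-1}x_m^{-1})\;=\;z_{f_1}^{\epsilon_1}z_{f_2}^{\epsilon_2}\cdots z_{f_m}^{\epsilon_m},
\]
which exhibits $z_e$ as a word in the generators $E(T)$ of $H_\Gamma$. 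Since $T$ is a tree the $u$--$v$ path is unique, so the resulting word, and hence $z_e$, depends only on $T$; and when $e$ already belongs to $T$ the construction simply returns the corresponding Papadima--Suciu generator.

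Finally I would record that the hypothesis that $\triangle_\Gamma$ is simply connected is exactly what (via \Cref{Suciu-Papadima presentation}) makes $H_\Gamma$ a finitely presented group on the tree edges, so the assignment $e\mapsto z_e$ consistently extends that generating set over all of $E(\Gamma)$. I do not anticipate a real obstacle: the only delicate step is the bookkeeping of edge orientations in the telescoping identity, and uniqueness of geodesics in a tree makes $z_e$ canonical, so the argument is essentially a verification.
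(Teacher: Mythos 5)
Your proof is correct and follows essentially the same route as the paper: both realize the element of a non-tree edge $e=(u,v)$ as the telescoping product of the tree-edge generators along the unique $u$--$v$ path in the spanning tree. Your version is in fact a bit more careful than the paper's (you explicitly check $uv^{-1}\in\ker\phi$ and track the orientations $\epsilon_i$), but there is no substantive difference in approach.
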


\begin{proof}
By abusing the notation we let $e$ denote the edge as well as the corresponding group element. 
Let us first fix a spanning tree $T$ of $\Gamma$, and $\iota(e)=u \text{ and } \tau(e)=v$. 
If $e\in E(T)$, then there is nothing to prove. 
Let us suppose that $e$ is an edge not in $T$. 
Also, let $A_\Gamma$ be the RAAG defined on $\Gamma$. 
Picking a path $e_1^{\epsilon_1}, \ldots, e_r^{\epsilon_r}$ in $T$ connecting $u$ to $v$, we see that $\iota(e)=\iota({e_1}^{\epsilon_1} \ldots  {e_r}^{\epsilon_r}) \in A_\Gamma$, $\epsilon_i=\pm 1$. 
Clearly, $e={e_1}^{\epsilon_1} \ldots  {e_r}^{\epsilon_r} \in H_\Gamma$.
\end{proof}

\begin{prop}\label{basic decomp}
Let $T$ be a spanning tree of $\Gamma$ and let $\triangle$ be an unfavourable triangle of $\Gamma$ with respect to $T$ such that it intersects the edge-set complement $\triangle_\Gamma^c$ exactly in one edge. Then $H_{\Gamma} \cong H_{\triangle} *_{\ZZ}  H_{\triangle_\Gamma^c}$.
\end{prop}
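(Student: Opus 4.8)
The plan is to realize the claimed splitting at the level of defining graphs and then transport it to Bestvina--Brady groups via the Dicks--Leary / Papadima--Suciu presentation. Write $\triangle = \{a,b,c\}$ for the vertices of the unfavourable triangle, and let $f$ be the single edge in which $\triangle$ meets $\triangle_\Gamma^c$; say $f = (a,b)$. Since $\triangle$ is unfavourable with respect to $T$, it has exactly one edge in $E(T)$. I would first argue that, up to replacing $T$ by another spanning tree with the same behaviour on the other triangles (the canonical choice), one may take the unique tree-edge of $\triangle$ to be $f$ itself; indeed $f$ is a separating edge of $\Gamma$ in the sense that $\Gamma = \triangle \cup \triangle_\Gamma^c$ with $\triangle \cap \triangle_\Gamma^c$ equal to the single edge $f$ (together with its two endpoints $a,b$), and a spanning tree of $\Gamma$ is obtained by gluing a spanning tree of $\triangle$ to a spanning tree of $\triangle_\Gamma^c$ along the common edge $f$. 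Set $\Gamma' = \triangle_\Gamma^c$; note $\Gamma = \triangle \cup \Gamma'$ and $\triangle \cap \Gamma' $ is the edge $f$, so $(\triangle, \Gamma', f)$ is a connected splitting of $\Gamma$ with separating subgraph a single edge ($K_2$). Because $\triangle_\Gamma$ is simply connected and the splitting subgraph $f$ is contractible, both $\triangle_{\triangle}$ and $\triangle_{\Gamma'}$ are simply connected (a Mayer--Vietoris / van Kampen argument on the flag complexes, using that $\triangle_\Gamma = \triangle_{\triangle}\cup_{f}\triangle_{\Gamma'}$), so $H_\triangle$, $H_{\Gamma'}$ and $H_\Gamma$ are all finitely presented.

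Next I would use the Dicks--Leary presentation. For $H_\Gamma$ the generators are $E(\Gamma) = E(\triangle) \sqcup \big(E(\Gamma') \setminus \{f\}\big)$, with $f \in E(\Gamma')$ shared, and the relators are the commutators $[e,e']$ and the triangle relations $ee' = e''$ coming from directed triangles; crucially, since $\triangle$ meets $\Gamma'$ only in $f$, every triangle of $\Gamma$ lies entirely inside $\triangle$ or entirely inside $\Gamma'$, so each Dicks--Leary relator of $H_\Gamma$ is a relator of $H_\triangle$ or of $H_{\Gamma'}$. Hence the presentation of $H_\Gamma$ is exactly the presentation obtained by taking the presentations of $H_\triangle$ and $H_{\Gamma'}$ and identifying the single common generator $f$. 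This is precisely a presentation of the amalgamated free product $H_\triangle *_{\langle f\rangle} H_{\Gamma'}$, where $\langle f \rangle$ is the cyclic subgroup generated by the class of $f$ in each factor.

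It then remains to check that $\langle f\rangle \cong \ZZ$ inside each factor, i.e. that $f$ has infinite order in $H_\triangle$ and in $H_{\Gamma'}$. For $H_{\Gamma'}$ this follows since $f = uv^{-1}$ for $f=(u,v)$ under the inclusion $H_{\Gamma'} \hookrightarrow A_{\Gamma'}$, and $uv^{-1}$ has infinite order in the RAAG $A_{\Gamma'}$ (it is nontrivial in the abelianization unless $u=v$). For $H_\triangle$: the triangle has a spanning tree with two edges, say $e=(a,c)$ and $g=(b,c)$, and $f$ is the non-tree edge, so in the Papadima--Suciu presentation $H_\triangle \cong \ZZ^2 = \langle e, g\rangle$ with $f$ expressed as $e g^{-1}$ (reading off from the directed-triangle relation), which again has infinite order. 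So $\langle f\rangle$ is infinite cyclic in both factors and the amalgamation is over $\ZZ$, giving $H_\Gamma \cong H_\triangle *_\ZZ H_{\Gamma'}$.

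The main obstacle I anticipate is the bookkeeping in the first paragraph: verifying that one can choose the spanning tree so that its unique edge inside $\triangle$ is the shared edge $f$, and simultaneously that this does not disturb whatever hypotheses are needed later (e.g. keeping the other internal triangles favourable in an iterated application). A secondary subtlety is making the "every triangle of $\Gamma$ is contained in $\triangle$ or in $\Gamma'$" claim airtight — this uses that $\triangle \cap \Gamma'$ has no vertex outside $\{a,b\}$, so a triangle with a vertex in each part would force an edge between $V(\triangle)\setminus\{a,b\} = \{c\}$ and $V(\Gamma')\setminus\{a,b\}$, contradicting the edge-set-complement description of how $\triangle$ sits in $\Gamma$. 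Once those two points are pinned down, the identification of presentations and the infinite-order check are routine.
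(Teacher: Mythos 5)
Your core argument is correct and follows essentially the same strategy as the paper: both realize $H_\Gamma$ as an amalgam of $H_\triangle$ and $H_{\triangle_\Gamma^c}$ over the cyclic subgroup generated by the shared edge $f$. The one genuine error is the normalization in your first paragraph: it is \emph{impossible} to choose a spanning tree whose unique edge inside $\triangle$ is the shared edge $f=(a,b)$. Since $\triangle$ meets $\triangle_\Gamma^c$ in exactly the edge $f$, the apex $c$ has no neighbours in $\Gamma$ other than $a$ and $b$, so any spanning tree must contain $(a,c)$ or $(b,c)$ to reach $c$; if $f$ were also in $T$ the triangle would have two tree edges and be favourable. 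Hence for an unfavourable triangle in this configuration the unique tree edge is necessarily one of $(a,c),(b,c)$, and $f\notin E(T)$ --- which is precisely the configuration the paper works with ($e_1\in E(T)$ an apex edge, $e_2=f$ the shared non-tree edge). Fortunately you never actually use this false normalization: your second and third paragraphs are spanning-tree-free, so the proof survives with the first paragraph simply deleted.

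The rest is sound and in one respect cleaner than the paper's proof. You use the Dicks--Leary presentation (all edges as generators), observe that every triangle of $\Gamma$ lies wholly in $\triangle$ or wholly in $\triangle_\Gamma^c$ (correctly justified: $c$ has no neighbour outside $\{a,b\}$), and conclude that the Dicks--Leary presentation of $H_\Gamma$ is exactly the two factor presentations glued along the single common generator $f$, i.e.\ the standard presentation of $H_\triangle *_{\langle f\rangle} H_{\triangle_\Gamma^c}$. The paper instead uses the Papadima--Suciu tree-edge generators, so $f$ is a generator of $H_\triangle$ but only a word in the generators of $H_{\triangle_\Gamma^c}$, and the final ``comparison of presentations'' is left implicit. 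Your explicit verification that $f$ has infinite order in both factors --- needed so that the two copies of $\langle f\rangle$ are isomorphic and the amalgamation is genuinely over $\ZZ$ --- is a point the paper glosses over and is worth keeping.
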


\begin{proof}
The Bestvina--Brady group $H_\Gamma$ is finitely presented and its generators are the edges of the spanning tree $T$.
Let $E(\triangle)=\{ e_1, e_2, e_3\}$. Since $\triangle$ intersects $\triangle_\Gamma^c$ exactly in one edge, $\triangle$ is non-internal by definition. On the other hand, $\triangle$ is unfavourable. Hence, by Observation~\ref{crucial obs}  we may assume that $e_1 \in E(T)$ and $e_2, e_3 \notin E(T)$. 
We know that $H_\triangle$, the Bestvina--Brady group associated to $\triangle$ is a copy of $\ZZ^2$ with the following presentation:

\begin{equation*}
    H_\triangle = \bigl \langle e_1, e_2 \mid e_1e_2=e_2e_1 \bigr\rangle .
\end{equation*}

Let $e_2$ be the edge common to both $\triangle$ and $\triangle_\Gamma^c$. 
To avoid any confusion, we will denote $e_2$ by $\overline{e_2}$ when seen as an edge of $\triangle_\Gamma^c$.
Note that the restriction of $T$ to $\triangle_\Gamma^c$ denoted by $T'$ is a spanning tree of $\triangle_\Gamma^c$.
As, $\overline{e_2} \notin E(T')$, it is not a generator of $H_{\triangle_\Gamma^c}$.
However by Lemma~\ref{edge element}, $\langle \overline{e_2}\rangle$ is a subgroup of $H_{\triangle_\Gamma^c}$. 
On the other hand $e_2$ is a generator of $H_\triangle$ and of course, $\langle e_2 \rangle$ is a subgroup of $H_\triangle$. 

Note that all generators and relators of $H_\Gamma$ are present in $H_{\triangle_\Gamma^c}$, except the generator $e_1$ and the relations involving $e_1$. 
So our first step is to include $e_1$ in some extension of $H_{\triangle_\Gamma^c}$; which can be done by taking the free product of $H_{\triangle_\Gamma^c}$ and $H_\triangle$. 
Thus $H_\triangle * H_{\triangle_\Gamma^c}$ is generated by all the generators of $H_\Gamma$ and $e_2$. 
So our next step is to get rid of $e_2$. 
Let us consider the following amalgamated product:

\[
H_\triangle *_{\langle e_2\rangle =\langle \overline{e_2} \rangle} H_{\triangle_\Gamma^c}
= H_\triangle *_{\langle e_2\rangle} H_{\triangle_\Gamma^c}.
\]

Thus we include $e_1$ in the generating set and at the same time we discard the generator $e_2$.
In the amalgamated free product one of the relations is $e_2=\overline{e_2}$ and $\overline{e_2}$ can be expressed as a word in terms of the other generators of $H_{\triangle_\Gamma^c}$; leaving $e_2$ redundant. 
Now comparing the presentations of $H_\Gamma$ and $H_\triangle *_{\langle e_2\rangle} H_{\triangle_\Gamma^c}$, it is clear that, $H_\Gamma \cong H_\triangle *_{\langle e_2\rangle} H_{\triangle_\Gamma^c} \cong H_{\triangle} *_{\ZZ}  H_{\triangle_\Gamma^c}$.
This completes the proof. 
\end{proof}

We note that, if $\Gamma = \Gamma_1 \cup \Gamma_2$, where $\Gamma_1, \Gamma_2$ are finite simplicial graphs and their flag complexes are simply-connected; moreover, if  $\Gamma_3 = \Gamma_1 \cap \Gamma_2$ is a connected induced subgraph of $\Gamma$, then $H_\Gamma = H_{\Gamma_1} *_{H_{\Gamma_3}} H_{\Gamma_2}$. This is proved in \cite[Proposition 3.5]{Chang Thesis} and discussed in \cite{Chang Dehn functions}.

For the \textit{iterated amalgamated product} structure, we are mainly interested in  \textit{unfavourable} graphs. 
We want to have an iterated amalgamation structure such that the factor groups are RAAGs. 
Such a decomposition is helpful in understanding various properties that are known for RAAGs and can pass through the iteration of amalgamations. 
The class of graphs for which such an iterated amalagamation exists is the following:

\begin{defn}
Let $\mathcal{G}$ be a family of finite, simplicial, unfavourable graphs $\Gamma$ with simply connected flag complex which, in addition, have a spanning tree $T$ such that all the internal triangles are favourable with respect to $T$.
\end{defn} 

Now we describe the structure of B-B groups associated to graphs in $\mathcal{G}$. 

\begin{thm}\label{structure thm}

Let $\Gamma \in \mathcal{G}$. Then $H_\Gamma$ splits as an iterated amalgamated product of a right-angled Artin group and finitely many copies of $\ZZ^2$ and in each iteration the amalgamation is over an infinite cyclic group.
\end{thm}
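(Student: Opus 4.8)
The plan is to argue by strong induction on the number of triangles of $\Gamma$, stripping off one copy of $\ZZ^2$ at each stage via \Cref{basic decomp} until the graph that remains is favourable, at which point \Cref{Chang's main thm} identifies its B--B group with a RAAG and the iteration stops.

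First I would pick out the triangle to strip off. Using the definition of $\mathcal{G}$, fix a spanning tree $T$ of $\Gamma$ for which every internal triangle is favourable. Because $\Gamma$ is an unfavourable graph, $T$ still has at least one unfavourable triangle $\triangle$; being unfavourable with respect to $T$, this $\triangle$ is not internal, so its intersection with the edge-set complement $\triangle_\Gamma^c$ is a single vertex or a single edge, and the single-vertex case is excluded by \Cref{one vertex lem}. Thus $\triangle$ meets $\triangle_\Gamma^c$ in exactly one edge and \Cref{basic decomp} yields
\[
H_\Gamma \;\cong\; H_\triangle *_{\ZZ} H_{\triangle_\Gamma^c},
\]
with $H_\triangle \cong \ZZ^2$ and the amalgamated subgroup infinite cyclic --- one step of the asserted decomposition.

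Next I would check that $\triangle_\Gamma^c$ is again admissible. Since $\triangle$ meets $\triangle_\Gamma^c$ in exactly one edge, exactly one vertex $w$ of $\triangle$ becomes isolated after deleting $E(\triangle)$; that is, every edge of $\Gamma$ at $w$ lies in $\triangle$, so $w$ has degree $2$, its two neighbours span the shared edge, and $\triangle_\Gamma^c = \Gamma \setminus \{w\}$. Then $\triangle_\Gamma^c$ is connected, $\triangle$ is the only triangle of $\Gamma$ containing $w$ (so $\triangle_\Gamma^c$ has exactly one fewer triangle than $\Gamma$), and in the flag complex $\triangle$ collapses onto the shared edge through a free face, so that $\triangle_{\triangle_\Gamma^c}$ is homotopy equivalent to $\triangle_\Gamma$ and in particular simply connected --- hence $H_{\triangle_\Gamma^c}$ is finitely presented. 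Finally $w$ is a leaf of $T$ (its unique $T$-edge is the one edge $e_1$ of $\triangle$ in $E(T)$; note a triangle cannot have all three edges in a tree), so $T' := T \setminus \{e_1\}$ is a spanning tree of $\triangle_\Gamma^c$, and since $e_1$ is incident to $w$ it lies on no triangle of $\triangle_\Gamma^c$, whence each triangle of $\triangle_\Gamma^c$ is favourable with respect to $T'$ precisely when it is favourable with respect to $T$. Consequently $\triangle_\Gamma^c$ is either favourable or, I claim, again a member of $\mathcal{G}$ with $T'$ as its witness.

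Granting that claim the induction closes: if $\triangle_\Gamma^c$ is favourable then $H_{\triangle_\Gamma^c}$ is a RAAG by \Cref{Chang's main thm} and the process terminates; otherwise $\triangle_\Gamma^c \in \mathcal{G}$ has strictly fewer triangles, the inductive hypothesis expresses $H_{\triangle_\Gamma^c}$ as an iterated amalgamated product over infinite cyclic groups whose innermost factor is a RAAG, and prepending the amalgamation displayed above gives the same description for $H_\Gamma$; since the number of triangles drops at every step the recursion is finite, bottoming out at a favourable graph (in the extreme case a tree, with free B--B group). The one step that genuinely needs work is the claim of the previous paragraph: one must rule out that deleting the free vertex $w$ promotes some previously non-internal triangle to an internal triangle that is not favourable for $T$ (equivalently $T'$). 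The only triangles at risk are those through the two neighbours of $w$, whose edge-set complement can shed $w$ upon its deletion, and I expect the resolution to be a short neighbourhood case analysis --- this is the place where the structural restriction built into $\mathcal{G}$, made explicit for its known members in \Cref{the important graphs}, is actually used.
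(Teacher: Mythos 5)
Your proposal is essentially the paper's own argument: you peel off the unfavourable (necessarily non-internal) triangles one at a time via \Cref{basic decomp}, use \Cref{one vertex lem} to see that each such triangle meets its edge-set complement in a single edge, and terminate at a favourable graph handled by \Cref{Chang's main thm} --- the paper does exactly this, merely enumerating all the unfavourable triangles at the outset and writing the amalgamations in one chain rather than inducting on the number of triangles. The residual worry you flag resolves by monotonicity (deleting the degree-two vertex $w$ only shrinks each remaining triangle's intersection with its edge-set complement, so a non-internal triangle cannot become internal, and favourability with respect to $T'$ is unaffected since the deleted tree edge lies on no surviving triangle), while your explicit check that the flag complex of $\triangle_\Gamma^c$ remains simply connected is a detail the paper's proof leaves implicit.
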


\begin{proof}
Since $\Gamma$  is in $\mathcal{G}$, $\Gamma$ has spanning trees with respect to which none of the internal triangles is unfavourable. 
Among those we can choose a spanning tree $T$ with respect to which $\Gamma$ has minimal number of unfavourable triangles.
By \Cref{crucial obs}, each of the unfavourable triangles has exactly one edge in the spanning tree.
Let $\{ \triangle_1, \triangle_2, \ldots, \triangle_n \}$ be the set of unfavorable triangles and denote by $e_i$ that particular edge of $\triangle_i$ which is in $E(T)$. 
We denote the other two edges of $\triangle_i$ by $f_i \text{ and } f'_i$; note that, these edges are not in $E(T)$. 
It follows from \Cref{one vertex lem} 
that $\triangle_i$ intersects the edge-set complement ${(\triangle_i)}_\Gamma^c$ in exactly one edge, say $f_i$ for each $i=1, 2, \dots, n$.

We already know that the B-B group $H_{\triangle_i}$ associated to $\triangle_i$ is finitely presented with the following presentation:

\begin{equation}\label{triangle equation}
 H_{\triangle_i} = \bigl\langle e_i, f_i | e_if_i=f'_i=f_ie_i \bigr\rangle \cong \ZZ^2; \phantom{aaa} i=1, 2, \ldots, n.
\end{equation}

Consider the subgraphs defined as follows: 
\begin{equation}\label{structure of gamma}
  \Gamma_1:={(\triangle_1)}_\Gamma^c \text{ and } \Gamma_i:= {(\triangle_i)}_{\Gamma_{i-1}}^c, \phantom{aa} i=2, 3, \ldots, n.  
\end{equation}
Also note that, $\Gamma_1$ is an induced subgraph of $\Gamma$ and $\Gamma_i$ is an induced subgraph of $\Gamma_{i-1}$.

By \cref{basic decomp} we have,
\begin{equation}\label{H-triangle}
 H_{\Gamma} = H_{\Gamma_1} *_{\langle f_1 \rangle} 
 H_{\triangle_1}
\end{equation}
and
\begin{equation}\label{H-gamma}
 H_{\Gamma_{i-1}} = H_{\Gamma_i} *_{\langle f_i \rangle} 
 H_{\triangle_i} \phantom{aa}\hbox{for~} i=2, 3, \ldots, n.
\end{equation}

Note that we choose the restriction of $T$ to $\Gamma_i$ as the spanning tree for $\Gamma_i$ and denote it by $T_i$, for $i=1, 2, 3, \ldots, n$. 
From the construction and \Cref{structure of gamma} it is clear that the graph $\Gamma_n:={(\triangle_n)}_{\Gamma_{n-1}}^c$ is a \textit{favourable graph}. 
So, $H_{\Gamma_n}$ is isomorphic to $A_{\overline{\Gamma}}$ for some finite simplicial graph $\overline{\Gamma}$, see \Cref{iso thm}. 
Recall that, the edges of $T_n$ are the vertices of $\overline{\Gamma}$ and two such vertices
are adjacent whenever the corresponding edges form a triangle in $\Gamma_n$.



We can now express the B-B group $H_{\Gamma}$ as the following (iterated) amalgamated product. 

\begin{equation*}
  H_{\Gamma}=\Bigg( \Big( (A_{\overline{\Gamma}} *_{\langle f_n \rangle}H_{\triangle_n})*_{\langle f_{n-1}\rangle} H_{\triangle_{n-1}} \Big)*_{\langle f_{n-2}\rangle} H_{\triangle_{n-2}} \bigg) \cdots*_{\langle f_{n-i} \rangle}H_{\triangle_{n-i}} \cdots \Bigg)*_{\langle f_1 \rangle} H_{\triangle_1}.
\end{equation*}
This completes the proof.
\end{proof}

To characterize the class of graphs which belong to the family $\mathcal{G}$ we use the notion of \textit{separating clique} from the literature (see \cite{GH} for more details). 




\begin{defn}
Let $\Gamma_0$ be an induced subgraph of $\Gamma$. $\Gamma_0$ is said to be \textit{separating} if the induced subgraph spanned by the vertices $V(\Gamma) \setminus V(\Gamma_0)$ has more connected components than $\Gamma$. If $\Gamma_0$ is a complete graph on $n$ vertices, then $\Gamma_0$ is called a \textit{separating clique} of $\Gamma$ and is denoted by $K_n$.
\end{defn}

Here is a partial characterization of graphs in $\mathcal{G}$.

\begin{thm}\label{the important graphs 1}
   Let $\Gamma$ be a finite simplicial connected unfavourable graph such that $\triangle_\Gamma$, the associated flag complex, is simply connected. If $\Gamma$ has a separating clique $K_n$, $n \geq 3$, then $\Gamma \in \mathcal{G}$.
\end{thm}

\begin{proof}
 Let us suppose that $\Gamma$ has $\ell \geq 1$ many internal triangles, say $\triangle_1, \triangle_2, \ldots, \triangle_\ell$. 
Our claim is that each $\triangle_i$ is a favourable triangle, for $i=1, 2, \ldots, \ell$.
We will prove our claim by induction on $\ell$. 

First consider the case $\ell=1$, i.e., only one internal triangle, say $\triangle$.  
We can choose a tree $T'$ by taking any two edges of $\triangle$ and then we expand $T'$ to a spanning tree of $\Gamma$. 
This makes $\triangle$ a favourable triangle with respect to $T$.


Assume that the claim holds for all graphs which have fewer than $\ell$ internal triangles. 
As $\Gamma$ has a separating clique $K_n$, then $\Gamma \setminus K_n = \Gamma_1 \sqcup \Gamma_2$ where $\Gamma_1$ and $\Gamma_2$ are each non-empty and share no vertices.
Let $V(K_n)= \{u_1, u_2, \ldots, u_n \}$. 
There exists an $u_i$ which is adjacent to a vertex of $\Gamma_1$ and the same thing holds for $\Gamma_2$. 
Without loss of generality we can assume that $u_1$ is adjacent to $\Gamma_1$ and $u_2$ is adjacent to a vertex of $\Gamma_2$. 
Now let $\Gamma_1'$ be the induced subgraph spanned by $V(\Gamma_1) \cup \{u_1\}$ and $\Gamma_2'$ be the induced subgraph spanned by $V(\Gamma_2) \cup \{u_2\}$. 
Then both $\Gamma_1'$ and $\Gamma_2'$ have less than $\ell$ internal triangles. 
By the induction hypothesis, $\Gamma_1'$ and $\Gamma_2'$  have spanning trees $T_1'$ and $T_2'$ respectively for which all the internal triangles are favourable. 
Since $K_n$ is a complete graph, we can choose the spanning tree of $K_n$ as a star graph. 
{Let $T_{K_n}$ be the spanning tree of $K_n$. 
Hence for $T_{K_n}$ every triangle of $K_n$ is favourable}. 
Let us consider $T=T_1' \cup T_{K_n} \cup T_2'$. Then $T$ is the desired spanning tree of $\Gamma$ for which all the internal triangles are favourable.   
\end{proof}

\begin{obs}\label{the important graphs 2}
If $\Gamma$ is a $1$-skeleton of an extra-special triangulation of the $2$-disk such that the underlying special triangulation is favourable, then $\Gamma \in \mathcal{G}$. 
The fact holds because of the following reason: the internal triangles of an extra-special triangulation of $D^2$ are the triangles of the special triangulation on which the extra-special triangulation is built by attaching one triangle along each boundary edge. 
According to our hypothesis, this special triangulation is favourable, hence it possesses a spanning tree $T'$ containing $2$ edges from each triangle of the special triangulation. 
Now if we construct the spanning tree $T$ of $\Gamma$ by expanding the tree $T'$, then our observation follows.
\end{obs}

We now look at two examples. 
First we consider the graph in \cref{fig:exam} which is an extra-special triangulation. 
Recall that, from \cite[Proposition 9.4]{PS} it follows that the corresponding B-B group is not isomorphic to a RAAG. 

\begin{exam}
Our choice of spanning tree is $T = \{e_1, \dots, e_5\}$ for the graph in \cref{fig:exam}. Hence the presentation for the B-B groups is
\[H_\Gamma = \langle e_1,  \ldots, e_5 \mid [e_1 , e_2], [e_2 , e_3], [e_3 , e_4], e_5{e_2}^{-1}e_3 = {e_2}^{-1}e_3 e_5 \rangle. \]
Note that the triangle $\triangle_1$ spanned by $\{v_4, v_5, v_6\}$ is unfavourable with respect to $T$ and let $E(\triangle_1) = \{ e_5, f_1, f'_1\}$ where $f_1, f'_1$ denote the respective edges $(v_5, v_4)$ and $(v_5, v_6)$. 
Let $\Gamma_1$ be the edge-set complement of $\triangle_1$. 
It is not hard to see that $\Gamma_1$ is a favourable graph; in fact, $\{e_1, e_2, e_3, e_4\}$ is a spanning tree with respect to which all the three triangles are favourable. 
Consequently, 
\[H_{\Gamma} = H_{\Gamma_1}\ast_{\langle f_1\rangle} H_{\triangle_1} \cong A_{\overline{\Gamma}} \ast_\ZZ \ZZ^2. \]
\end{exam}
Now we consider an example where                
the underlying graph is not an extra-special triangulation and the minimal separating clique is $K_2$. 

\begin{exam}
Let us consider $\Gamma$ in \Cref{example}. 
We choose the following spanning tree $T=\{ e_1, e_2, \ldots, e_{11}\}$. 
Note that there are two non-favourable triangles $\triangle(v_1, v_2, v_5)=~\triangle_1$ and $\triangle(v_{10}, v_{11}, v_9)=\triangle_2$. 
In fact, the reader can verify that for any choice of spanning tree, there will be at least two unfavourable triangles.

We have, $E(\triangle_1)=\{e_1, f_1, f_1'\} \text{ where } e_1 \in~E(T) \text{ and } f_1, f_1' \notin E(T)$ and $E(\triangle_2)=\{e_{10}, f_2, f_2'\} \text{ where } e_{10} \in~E(T) \text{ and } f_2, f_2' \notin~E(T)$. 
Let us denote $\Gamma_1:={\triangle_1}_\Gamma^c$ (see \Cref{example_1}) and $\Gamma_2:={\triangle_2}_{\Gamma_1}^c$ (see \Cref{example_2}). Hence, $\Gamma_1$ is the induced subgraph of $\Gamma$  and $\Gamma_2$ is the induced subgraph of $\Gamma_{1}$. 
We have:
\begin{align*}
H_{\triangle_1} &= \bigl\langle e_1, f_1 | e_1f_1=f'_1=f_1e_1 \bigr\rangle \cong \ZZ^2. \\
H_{\triangle_2} &= \bigl\langle e_{10}, f_2 | e_{10}f_2=f'_2=f_2e_{10} \bigr\rangle \cong \ZZ^2.
\end{align*}

Thus 
\begin{equation}\label{H-triangle again}
 H_{\Gamma} = H_{\Gamma_1} *_{\langle f_1 \rangle} 
 H_{\triangle_1},  
\end{equation}
and,
\begin{equation}\label{H-gamma again}
 H_{\Gamma_{1}} = H_{\Gamma_2} *_{\langle f_2 \rangle} 
 H_{\triangle_2}.
\end{equation}

Note that $\Gamma_2$ is a favourable graph. So, $H_{\Gamma_{2}} \cong A_{\overline{\Gamma}}$ where 
$$A_{\overline{\Gamma}}= \langle w_2, \ldots, w_9, w_{11} \mid [w_2, w_3],[w_2, w_4], [w_4, w_5], [w_7, w_8], [w_8, w_9], [w_9, w_{11}] \rangle.$$ 
Thus finally we have the desired iterated amalgamation structure of $H_\Gamma \colon$

\[H_\Gamma=(A_{\overline{\Gamma}} *_{\langle f_2 \rangle} H_{\triangle_2}) *_{\langle f_1 \rangle} H_{\triangle_1} \cong (A_{\overline{\Gamma}} *_{\ZZ} \ZZ^2)*_{\ZZ} \ZZ^2.\]
\begin{figure}[H]
  \centering 
\begin{tikzpicture}[shorten >=1pt,node distance=20cm,auto]
\tikzset{
    edge/.style={draw=black,postaction={on each segment={mid arrow=black}}}
}
\node[fill=black!100, state, scale=0.10, vrtx/.style args = {#1/#2}{label=#1:#2}] (1) [vrtx=left/$v_6$] {};

\node[fill=black!100, state, scale=0.10, vrtx/.style args = {#1/#2}{label=#1:#2}] (2) [vrtx=right/$v_4$] [ below right of = 1] {};

\node[fill=black!100, state, scale=0.10, vrtx/.style args = {#1/#2}{label=#1:#2}] (3) [vrtx=left/$v_5$] [ below left of = 1] {};

\node[fill=black!100, state, scale=0.10, vrtx/.style args = {#1/#2}{label=#1:#2}] (4) [vrtx=right/$v_3$] [ below right of = 2] {};

\node[fill=black!100, state, scale=0.10, vrtx/.style args = {#1/#2}{label=#1:#2}] (5) [vrtx=below/$v_2$] [ below left of = 2] {};

\node[fill=black!100, state, scale=0.10, vrtx/.style args = {#1/#2}{label=#1:#2}] (6) [vrtx=left/$v_1$] [ below left of = 3] {};

\phantom{\node[fill=black!100, state, scale=0.10, vrtx/.style args = {#1/#2}{label=#1:#2}] (13) [vrtx=right/$v_{13}$] [right of = 1] {};}

\phantom{\node[fill=black!100, state, scale=0.10, vrtx/.style args = {#1/#2}{label=#1:#2}] (14) [vrtx=right/$v_{14}$] [right of = 13] {};}

\phantom{\node[fill=black!100, state, scale=0.10, vrtx/.style args = {#1/#2}{label=#1:#2}] (15) [vrtx=right/$v_{15}$] [right of = 14] {};}


\draw[edge] (6) -- (5) node[midway, below] {$e_1$};
\draw[edge] (5) -- (3) node[midway, right] {$f_1$};
\draw[edge] (5) -- (2) node[midway, right] {$e_2$};
\draw[edge] (5) -- (4) node[midway, below] {$e_3$};
\draw[edge] (2) -- (1) node[midway, right] {$e_5$};
\draw[edge] (6) -- (3) node[midway, right] {$f_1'$};
\draw[edge] (3) -- (2) node[midway, above] {$e_4$};
\draw[edge] (3) -- (1);
\draw[edge] (4) -- (2);

\node[fill=black!100, state, scale=0.10, vrtx/.style args = {#1/#2}{label=#1:#2}] (7) [vrtx=right/$v_7$] [right of = 15] {};

\node[fill=black!100, state, scale=0.10, vrtx/.style args = {#1/#2}{label=#1:#2}] (8) [vrtx=right/$v_8$] [ below right of = 7] {};

\node[fill=black!100, state, scale=0.10, vrtx/.style args = {#1/#2}{label=#1:#2}] (9) [vrtx=left/$v_9$] [ below left of = 7] {};

\node[fill=black!100, state, scale=0.10, vrtx/.style args = {#1/#2}{label=#1:#2}] (10) [vrtx=right/$v_{12}$] [ below right of = 8] {};

\node[fill=black!100, state, scale=0.10, vrtx/.style args = {#1/#2}{label=#1:#2}] (11) [vrtx=below/$v_{11}$] [ below left of = 8] {};

\node[fill=black!100, state, scale=0.10, vrtx/.style args = {#1/#2}{label=#1:#2}] (12) [vrtx=left/$v_{10}$] [ below left of = 9] {};

\draw[edge] (12) -- (11) node[midway, below] {$e_{10}$};
\draw[edge] (11) -- (9) node[midway, right] {$f_2$};
\draw[edge] (11) -- (8) node[midway, right] {$e_9$};
\draw[edge] (11) -- (10) node[midway, below] {$e_{11}$};
\draw[edge] (8) -- (7) node[midway, right] {$e_{7}$};
\draw[edge] (12) -- (9) node[midway, right] {$f_2'$};
\draw[edge] (9) -- (8) node[midway, above] {$e_8$};
\draw[edge] (9) -- (7);
\draw[edge] (10) -- (8);
\draw[edge] (1) -- (7) node[midway, below] {$e_{6}$};

\end{tikzpicture}
\caption{The graph $\Gamma$.}
\label{example}
\end{figure}

\begin{figure}[H]
  \centering 
\begin{tikzpicture}[shorten >=1pt,node distance=20cm,auto]
\tikzset{
    edge/.style={draw=black,postaction={on each segment={mid arrow=black}}}
}
\node[fill=black!100, state, scale=0.10, vrtx/.style args = {#1/#2}{label=#1:#2}] (1) [vrtx=left/$v_6$] {};

\node[fill=black!100, state, scale=0.10, vrtx/.style args = {#1/#2}{label=#1:#2}] (2) [vrtx=right/$v_4$] [ below right of = 1] {};

\node[fill=black!100, state, scale=0.10, vrtx/.style args = {#1/#2}{label=#1:#2}] (3) [vrtx=left/$v_5$] [ below left of = 1] {};

\node[fill=black!100, state, scale=0.10, vrtx/.style args = {#1/#2}{label=#1:#2}] (4) [vrtx=right/$v_3$] [ below right of = 2] {};

\node[fill=black!100, state, scale=0.10, vrtx/.style args = {#1/#2}{label=#1:#2}] (5) [vrtx=below/$v_2$] [ below left of = 2] {};

\phantom{\node[fill=black!100, state, scale=0.10, vrtx/.style args = {#1/#2}{label=#1:#2}] (6) [vrtx=left/$v_1$] [ below left of = 3] {};}

\phantom{\node[fill=black!100, state, scale=0.10, vrtx/.style args = {#1/#2}{label=#1:#2}] (13) [vrtx=right/$v_{13}$] [right of = 1] {};}

\phantom{\node[fill=black!100, state, scale=0.10, vrtx/.style args = {#1/#2}{label=#1:#2}] (14) [vrtx=right/$v_{14}$] [right of = 13] {};}

\phantom{\node[fill=black!100, state, scale=0.10, vrtx/.style args = {#1/#2}{label=#1:#2}] (15) [vrtx=right/$v_{15}$] [right of = 14] {};}


\draw[edge] (5) -- (3) node[midway, right] {$f_1$};
\draw[edge] (5) -- (2) node[midway, right] {$e_2$};
\draw[edge] (5) -- (4) node[midway, below] {$e_3$};
\draw[edge] (2) -- (1) node[midway, right] {$e_5$};
\draw[edge] (3) -- (2) node[midway, above] {$e_4$};
\draw[edge] (3) -- (1);
\draw[edge] (4) -- (2);


\node[fill=black!100, state, scale=0.10, vrtx/.style args = {#1/#2}{label=#1:#2}] (7) [vrtx=right/$v_7$] [right of = 15] {};

\node[fill=black!100, state, scale=0.10, vrtx/.style args = {#1/#2}{label=#1:#2}] (8) [vrtx=right/$v_8$] [ below right of = 7] {};

\node[fill=black!100, state, scale=0.10, vrtx/.style args = {#1/#2}{label=#1:#2}] (9) [vrtx=left/$v_9$] [ below left of = 7] {};

\node[fill=black!100, state, scale=0.10, vrtx/.style args = {#1/#2}{label=#1:#2}] (10) [vrtx=right/$v_{12}$] [ below right of = 8] {};

\node[fill=black!100, state, scale=0.10, vrtx/.style args = {#1/#2}{label=#1:#2}] (11) [vrtx=below/$v_{11}$] [ below left of = 8] {};

\node[fill=black!100, state, scale=0.10, vrtx/.style args = {#1/#2}{label=#1:#2}] (12) [vrtx=left/$v_{10}$] [ below left of = 9] {};

\draw[edge] (12) -- (11) node[midway, below] {$e_{10}$};
\draw[edge] (11) -- (9) node[midway, right] {$f_{2}$};
\draw[edge] (11) -- (8) node[midway, right] {$e_9$};
\draw[edge] (11) -- (10) node[midway, below] {$e_{11}$};
\draw[edge] (8) -- (7) node[midway, right] {$e_{7}$};
\draw[edge] (12) -- (9)node[midway, right] {$f_{2}'$};
\draw[edge] (9) -- (8) node[midway, above] {$e_8$};
\draw[edge] (9) -- (7);
\draw[edge] (10) -- (8);
\draw[edge] (1) -- (7) node[midway, below] {$e_{6}$};

\end{tikzpicture}

\caption{The graph $\Gamma_1$.}
\label{example_1}
\end{figure}

\begin{figure}[H]
  \centering 
\begin{tikzpicture}[shorten >=1pt,node distance=20cm,auto]
\tikzset{
    edge/.style={draw=black,postaction={on each segment={mid arrow=black}}}
}
\node[fill=black!100, state, scale=0.10, vrtx/.style args = {#1/#2}{label=#1:#2}] (1) [vrtx=left/$v_6$] {};

\node[fill=black!100, state, scale=0.10, vrtx/.style args = {#1/#2}{label=#1:#2}] (2) [vrtx=right/$v_4$] [ below right of = 1] {};

\node[fill=black!100, state, scale=0.10, vrtx/.style args = {#1/#2}{label=#1:#2}] (3) [vrtx=left/$v_5$] [ below left of = 1] {};

\node[fill=black!100, state, scale=0.10, vrtx/.style args = {#1/#2}{label=#1:#2}] (4) [vrtx=right/$v_3$] [ below right of = 2] {};

\node[fill=black!100, state, scale=0.10, vrtx/.style args = {#1/#2}{label=#1:#2}] (5) [vrtx=below/$v_2$] [ below left of = 2] {};

\phantom{\node[fill=black!100, state, scale=0.10, vrtx/.style args = {#1/#2}{label=#1:#2}] (6) [vrtx=left/$v_1$] [ below left of = 3] {};}

\phantom{\node[fill=black!100, state, scale=0.10, vrtx/.style args = {#1/#2}{label=#1:#2}] (13) [vrtx=right/$v_{13}$] [right of = 1] {};}

\phantom{\node[fill=black!100, state, scale=0.10, vrtx/.style args = {#1/#2}{label=#1:#2}] (14) [vrtx=right/$v_{14}$] [right of = 13] {};}

\phantom{\node[fill=black!100, state, scale=0.10, vrtx/.style args = {#1/#2}{label=#1:#2}] (15) [vrtx=right/$v_{15}$] [right of = 14] {};}


\draw[edge] (5) -- (3) node[midway, right] {$f_1$};
\draw[edge] (5) -- (2) node[midway, right] {$e_2$};
\draw[edge] (5) -- (4) node[midway, below] {$e_3$};
\draw[edge] (2) -- (1) node[midway, right] {$e_5$};
\draw[edge] (3) -- (2) node[midway, above] {$e_4$};
\draw[edge] (3) -- (1);
\draw[edge] (4) -- (2);


\node[fill=black!100, state, scale=0.10, vrtx/.style args = {#1/#2}{label=#1:#2}] (7) [vrtx=right/$v_7$] [right of = 15] {};

\node[fill=black!100, state, scale=0.10, vrtx/.style args = {#1/#2}{label=#1:#2}] (8) [vrtx=right/$v_8$] [ below right of = 7] {};

\node[fill=black!100, state, scale=0.10, vrtx/.style args = {#1/#2}{label=#1:#2}] (9) [vrtx=left/$v_9$] [ below left of = 7] {};

\node[fill=black!100, state, scale=0.10, vrtx/.style args = {#1/#2}{label=#1:#2}] (10) [vrtx=right/$v_{12}$] [ below right of = 8] {};

\node[fill=black!100, state, scale=0.10, vrtx/.style args = {#1/#2}{label=#1:#2}] (11) [vrtx=below/$v_{11}$] [ below left of = 8] {};

\phantom{\node[fill=black!100, state, scale=0.10, vrtx/.style args = {#1/#2}{label=#1:#2}] (12) [vrtx=left/$v_{10}$] [ below left of = 9] {};}

\draw[edge] (11) -- (9) node[midway, right] {$f_2$};
\draw[edge] (11) -- (8) node[midway, right] {$e_9$};
\draw[edge] (11) -- (10) node[midway, below] {$e_{11}$};
\draw[edge] (8) -- (7) node[midway, right] {$e_{7}$};
\draw[edge] (9) -- (8) node[midway, above] {$e_8$};
\draw[edge] (9) -- (7);
\draw[edge] (10) -- (8);
\draw[edge] (1) -- (7) node[midway, below] {$e_{6}$};

\end{tikzpicture}

\caption{The graph $\Gamma_2$.}
\label{example_2}
\end{figure}
\end{exam}


\section{Concluding remarks}\label{conrem}

We end the article with a non-example and some open questions. 

\begin{exam}
Consider the graph in \cref{stalling}, which is the $3$-fold join of two isolated vertices. 
The RAAG corresponding to this graph is the direct product of three copies of $F_2$ and the B-B group is the same group that appeared in the seminal paper of Stallings. 
It is a finitely presented group which is not of type $\mathbf{FP}_3$. 
The reader can easily verify that no matter which spanning tree is chosen there are at least two internal triangles that are unfavourable. 
In fact, any $k$-fold join of two isolated vertices has this property; so these graphs do not belong to the class $\mathcal{G}$. 
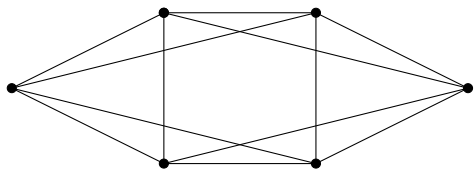
\begin{figure}[H]
    \centering
   \begin{tikzpicture}
    \draw (0,0) -- (2,0) -- (2,2) -- (0,2) -- (0,0);
    \draw (-2,1) -- (0,0) -- (4, 1);
    \draw (-2,1) -- (2,0) -- (4, 1);
    \draw (-2,1) -- (2,2) -- (4, 1);
    \draw (-2,1) -- (0,2) -- (4, 1);
\node [fill=black!100,circle,scale=0.3,draw] at (0,0) {};
\node [fill=black!100,circle,scale=0.3,draw] at (2,0) {};
\node [fill=black!100,circle,scale=0.3,draw] at (2,2) {};
\node [fill=black!100,circle,scale=0.3,draw] at (0,2) {};
\node [fill=black!100,circle,scale=0.3,draw] at (0,2) {};
\node [fill=black!100,circle,scale=0.3,draw] at (-2,1) {};
\node [fill=black!100,circle,scale=0.3,draw] at (4,1) {};
    \end{tikzpicture}
    \caption{A graph not in the class $\mathcal{G}$.}
    \label{stalling}
\end{figure}
\end{exam}

\begin{que}
What is the complete characterization of the family $\mathcal{G}$? 
Moreover, if a Bestvina-Brady group $H_{\Gamma}$ decomposes as an iterated amalgamation (say, like the one specified in~\Cref{structure thm}) what can one say about the structure of $\Gamma$?
\end{que}

A group $G$ is called \textit{coherent} if each finitely generated subgroup of $G$ is finitely presented. 
Droms \cite[Theorem 1]{Droms} characterised coherent RAAGs in terms of the defining graph. 
A right-angled Artin group $A_\Gamma$ is \textit{coherent} if and only if $\Gamma$ does not have any cycle of length $n \geq 4$ as an induced subgraph (i.e., $\Gamma$ is chordal). 
Equivalently, $\Gamma$ has a separating clique and each component is either a clique or has a separating clique and so on. 

RAAGs corresponding to trees are obvious examples of coherent RAAGs. 
For example, $A_{P_4} \cong \ZZ^2 *_\ZZ \ZZ^2 *_\ZZ \ZZ^2$, the RAAG corresponding to the path on $4$ vertices is an important group. 
Kim and Koberda \cite[Theorem 7]{KK} proved that any $2$-dimensional coherent RAAG embeds in $A_{P_4}$.


Suppose $A_\Gamma$ is a coherent right-angled Artin group with connected defining graph $\Gamma$. 
Then the corresponding Bestvina-Brady group $H_\Gamma$ is also expressible as either an amalgamated product or a free product of free abelian (and infinite cyclic) groups. 
This follows directly from the definition of coherent RAAGs and \cite[Proposition 3.5]{Chang Thesis}. 
We recall (see \Cref{free gp BB}) that when $\Gamma$ is a finite tree, then $H_\Gamma$ is a free group of finite rank, i.e., a free product of finitely many copies of the infinite cyclic group. 
Most importantly, this decomposition also relies completely on the underlying graph structure. 
Note that the underlying graph of a coherent RAAG need not belong to the family $\mathcal{G}$.

We describe an example of a Bestvina-Brady group $H_\Gamma$ which decomposes as an amalgamated product over free-abelian groups but not covered by Theorem~\ref{structure thm}.
 
\begin{figure}[H]
\centering
\begin{tikzpicture}[shorten >=1pt,node distance=20cm,auto]

\node[fill=black!100, state, scale=0.10, vrtx/.style args = {#1/#2}{label=#1:#2}] (1) [vrtx=below/$v_1$] {};
\node[fill=black!100, state, scale=0.10, vrtx/.style args = {#1/#2}{label=#1:#2}] (2) [vrtx=below/$v_2$] [right of = 1] {};
\node[fill=black!100, state, scale=0.10, vrtx/.style args = {#1/#2}{label=#1:#2}] (3) [vrtx=right/$v_3$] [above of = 2] {};
\node[fill=black!100, state, scale=0.10, vrtx/.style args = {#1/#2}{label=#1:#2}] (4) [vrtx=left/$v_4$] [left of = 3] {};
\node[fill=black!100, state, scale=0.10, vrtx/.style args = {#1/#2}{label=#1:#2}] (5) [vrtx=left/$v_5$] [above left of = 4] {};
\node[fill=black!100, state, scale=0.10, vrtx/.style args = {#1/#2}{label=#1:#2}] (6) [vrtx=below/$v_6$] [below right of = 2] {};
\draw (1) -- (2) -- (3) -- (4)--(1);
\draw (1) -- (5) -- (4);
\draw (2) -- (6) -- (3);
\draw (1) -- (6);
\draw (5) -- (3);
\draw (1) -- (3);
\draw (2) -- (4);
\end{tikzpicture}

\caption{Defining graph of a coherent RAAG.}
    \label{fig:coherent RAAG}
\end{figure}

\begin{exam}
$A_\Gamma$ defined on the graph depicted in \Cref{fig:coherent RAAG} is coherenrt. 
It is clear that the induced subgraph $\Gamma_0$ on the set of vertices $\{v_1, v_2, v_3\}$ is a separating clique. 
Thus, $A_\Gamma= A_{\Gamma_1} *_{A_{\Gamma_0}} A_{\Gamma_2}$, where $\Gamma_1$ is the induced subgraph on the vertices $\{ v_1, v_2, v_3, v_4, v_5\}$ and $\Gamma_2$ is the induced subgraph on the vertices $\{ v_1, v_2, v_3, v_6\}$. 
Similarly, we can apply the same method of decomposition on $A_{\Gamma_1}$. 
As $\Gamma_1$ has the separating clique generated by the vertices $\{v_1, v_3, v_4\}$ and $\Gamma_2$ is itself a clique. Finally (see \cite[Corollary 3.12]{Chang-splitting}), we have $H_\Gamma \cong (\ZZ^3 *_{\ZZ^2} \ZZ^3)*_{\ZZ^2} \ZZ^3$.
\end{exam}

This stimulates the ensuing question:
\begin{que}
For which graphs can the corresponding B-B group be expressed as an iterated amalgamation of RAAGs? In other words,
what will be the possible characterization of the family of graphs $\mathcal{F}$ so that $H_\Gamma$ can be written as an iterated amalgamated product of RAAGs if and only if $\Gamma \in \mathcal{F}$?
\end{que}

\section*{Acknowledgements}

\noindent The authors grateful to the anonymous referee for insightful comments, suggesting the open problems and a detailed report, it helped refine the article.
The first named author is partially supported by a grant from the Infosys Foundation.
The second named author would like to express gratitude for the generous hospitality received from Universidad del Pa\'{\i}s Vasco and financial support through a Margarita Salas grant.


\end{document}